\documentclass{amsart}

\usepackage{pb-diagram}
\usepackage{pstricks}

 \newtheorem{thm}{Theorem}[section]
 \newtheorem*{thm*}{Theorem}
 \newtheorem{lem}[thm]{Lemma}
 \newtheorem{prop}[thm]{Proposition}
 \theoremstyle{definition}
 \newtheorem{defn}[thm]{Definition}
 
 \newtheorem{expl}[thm]{Example}
 \theoremstyle{remark}
 \newtheorem*{ack}{Acknowledgements}
 \numberwithin{equation}{section}

\begin{document}

\title[Infinity-Inner-Products on A-Infinity-Algebras]
{Infinity-Inner-Products on A-Infinity-Algebras}

\author[Thomas~Tradler]{Thomas~Tradler}
\address{Thomas Tradler, Department of Mathematics, College of Technology of the City University
of New York, 300 Jay Street, Brooklyn, NY 11201, USA}
\email{ttradler@citytech.cuny.edu}

\begin{abstract}
We give a self contained introduction to A$_\infty$-algebras, A$_\infty$-bimodules and maps between them. The case of A$_\infty$-bimodule-map between $A$ and its dual space $A^{*}$, which we call $\infty$-inner-product, will be investigated in detail. In particular, we describe the graph complex associated to $\infty$-inner-product. In a later paper, we show how $\infty$-inner-products can be used to model the string topology BV-algebra on the free loop space of a Poincar\'e duality space.
\end{abstract}

\maketitle

A$_\infty$-algebras were first introduced by J. Stasheff in \cite{S}
in the study of the homotopy associativity of H-spaces. Since then,
the concept has found numerous applications in many fields of
mathematics and physics. Our interest in A$_\infty$-algebras stems
from applying the concept to the chain level of a Poincar\'e duality
space, and to ultimately obtain a model for string topology
operations defined by M. Chas and D. Sullivan in \cite{CS}. To this
end, it is necessary to develop an appropriate algebraic notion of
Poincar\'e duality. A detailed introduction to such a concept will
be presented in these notes.

In order to describe Poincar\'e duality for a topological space $X$,
note that its cohomology $H$ is an algebra under the cup product,
and thus both $H$ and its dual $H^*$ are bimodules over $H$. With
this, Poincar\'e duality is given by a bimodule equivalence $H(X)\to
H^*(X)$ between the homology and cohomology of $X$. A suitable chain
level version of this may be obtained by considering the bimodule
concept in a homotopy invariant way. Our approach for Poincar\'e
duality consists of examining the A$_\infty$-algebra structure on
the cochains $A$ of $X$. With this, one can see, that both $A$ and
its dual $A^*$ are in fact A$_\infty$-bimodules in an appropriate
sense, described below. Finally, we may complete the analogy by
taking the chain level Poincar\'e duality to be a map between the
A$_\infty$-bimodules $A$ and $A^*$, which we call an
$\infty$-inner-product. We will review these concepts and will
investigate a useful graph complex that is associated to
A$_\infty$-algebras with $\infty$-inner-products. We show how the
graph complex gives rise to a sequence of polyhedra, which include as a special case
Stasheff's associahedra coming from A$_\infty$-algebras.

This paper is the first in a series, setting the foundation for the
algebraic notation necessary to describe Poincar\'e duality at the
chain level, and with this, modeling string topology algebraically.
The next step in this direction is taken in \cite{TZ} in
collaboration with M. Zeinalian, where the following theorem is
proved.
\begin{thm*}[\cite{TZ} Theorem 3.1.4]
Let $X$ be a compact triangulated Poincar\'e duality space, in which
the closure of every simplex is contractible. Then there exists a
symmetric $\infty$-inner-product on the cochains $A$ of $X$, which
on the lowest level induces Poincar\'e duality on homology $H\to H^*$.
\end{thm*}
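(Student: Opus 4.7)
The plan is to construct the symmetric $\infty$-inner-product $\phi=\{\phi_{i,j}\}_{i,j\geq 0}\colon A\to A^*$ by induction on the total arity $n=i+j$, with $\phi_{0,0}$ given by the chain-level Poincar\'e duality map and the higher $\phi_{i,j}$ produced by an acyclic-carrier argument that the hypothesis on simplex-closures is tailored to support.

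First I would fix a simplicial model for $A=C^*(X)$ with its cup-product A$_\infty$-structure (so $m_2=\smile$ and $m_k=0$ for $k\geq 3$), choose a cycle $Z\in C_n(X)$ representing the fundamental class, and set $\phi_{0,0}(a)(b):=\langle a\smile b,Z\rangle$. Since $Z$ is a cycle, $\phi_{0,0}$ is a chain map, and by the Poincar\'e duality hypothesis it induces Poincar\'e duality on cohomology. Graded symmetry of $\phi_{0,0}$ holds only up to a Steenrod $\smile_1$-homotopy, so the higher operations must be built to absorb this defect and its iterated descendants.

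For the inductive step, assume $\phi_{i',j'}$ has been chosen for all $i'+j'<n$. The A$_\infty$-bimodule-map relation at level $n$ rewrites as
\begin{equation*}
\partial(\phi_{i,j})=R_{i,j}\bigl(\{m_k\},\{\phi_{i',j'}\}_{i'+j'<n}\bigr),
\end{equation*}
whose right-hand side is, by the quadratic A$_\infty$-bimodule relations together with the inductive hypothesis, a cocycle. Regarding $\phi_{i,j}$ as a natural multilinear cochain-operation on ordered tuples of simplices, I would then invoke the acyclic carrier theorem with carrier the closure of the union of the simplices in the tuple: by the contractibility hypothesis each simplex-closure is acyclic, intersections of simplex-closures are simplex-closures, and iterated Mayer--Vietoris propagates acyclicity to finite unions. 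Hence the obstruction to solving for $\phi_{i,j}$ vanishes.

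Symmetry is the subtle ingredient, and I expect it to be the main obstacle. A symmetric $\infty$-inner-product is a fixed point of a $\mathbb{Z}/2$-involution that interchanges the distinguished bimodule input of $\phi_{i,j}$ with the $A^*$-output slot, and this involution mixes the defining equations across arities, so simply averaging at the end destroys the bimodule-map relations. The remedy is to run the whole construction $\mathbb{Z}/2$-equivariantly: the acyclic carriers above are manifestly invariant under permuting input simplices, so an equivariant version of the acyclic carrier theorem applies, provided $\phi_{0,0}$ is already strictly symmetric. This can be arranged by symmetrizing $\phi_{0,0}$ against $\smile_1$, or by replacing cup product from the outset with a commutative A$_\infty$-refinement built via an $E_\infty$-operad. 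Iterating the equivariant obstruction step then delivers the desired symmetric $\infty$-inner-product whose lowest level recovers Poincar\'e duality.
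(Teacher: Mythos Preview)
This theorem is not proved in the present paper at all: it is stated in the introduction as a result imported from \cite{TZ} (Tradler--Zeinalian--Sullivan), and the paper's role is only to develop the surrounding language of $\infty$-inner-products. There is therefore no proof here to compare your proposal against.

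That said, your outline has a genuine gap in the acyclic-carrier step. You take as carrier the closure of the union of the simplices appearing in a tuple, and argue that it is acyclic because ``intersections of simplex-closures are simplex-closures, and iterated Mayer--Vietoris propagates acyclicity to finite unions.'' The second clause is false. Even granting that each closed simplex is contractible and that an intersection of two closed simplices is again a closed simplex, the union of finitely many closed simplices need not be acyclic: three $1$-simplices forming the boundary of a triangle have pairwise intersections that are single vertices, yet their union is a circle. So your proposed carrier is not acyclic in general, and the obstruction to solving $\partial(\phi_{i,j})=R_{i,j}$ does not vanish for the reason you give. The contractibility hypothesis in the theorem is used in \cite{TZ} in a more local way (roughly, to guarantee acyclicity of stars or dual cells, which then feeds a local-to-global inductive construction), not to make arbitrary unions of simplices acyclic. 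Your inductive framework and the idea of building symmetry in equivariantly are reasonable in spirit, but the specific carrier you name does not work.
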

It is interesting to note, that the lowest level of this theorem
consists of capping with the fundamental cycle of the space, which
does not give a bimodule map at the chain level, but which requires the
notion of A$_\infty$-bimodule maps.

The next step is then to model the string topology operations on the
Hochschild-cochain-complex of an A$_\infty$-algebra $A$ with
$\infty$-inner-product. Let $C^*(A,A)$ and $C^*(A,A^*)$ denote the
Hochschild-cochain-complex of $A$ with values in $A$ and $A^*$,
respectively. The following are some well-known operations on these
space, see e.g. \cite{GJ2}. The $\smile$-product, $\smile:C^{*}(A,A)
\otimes C^{*}(A,A) \to C^{*}(A,A)$, and the Gerstenhaber-bracket,
$[\cdot,\cdot]:C^{*}(A,A) \otimes C^{*}(A,A) \to C^{*}(A,A)$, were
studied in the deformation theory of associative algebras by M.
Gerstenhaber in \cite{G}. On the other hand, if $A$ has a unit $1\in
A$, then we may define Connes' $B$-operator, which may be dualized
to an operation $B:C^{*}(A,A^{*}) \to C^{*}(A,A^{*})$. Using these
operations, it was shown in \cite{T}, that these operations combine
to give a BV-algebra.
\begin{thm*}[\cite{T} Theorem 3.1]
Let $A$ be a unital A$_\infty$-algebra with symmetric and
non-degenerate $\infty$-inner-product. Using the
$\infty$-inner-product, one has an induced quasi-isomorphism of
Hochschild-complexes $C^*(A,A)\to C^*(A,A^*)$. Then, $B$ and
$\smile$ assemble to give a BV-algebra on Hochschild-cohomology,
such that its induced Gerstenhaber-bracket is the one given in
\cite{G}.
\end{thm*}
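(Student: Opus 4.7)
The plan is to transport Connes' $B$-operator from the dual Hochschild complex $C^*(A,A^*)$ to $C^*(A,A)$ using the $\infty$-inner-product, and then to verify the seven-term BV identity against the $\smile$-product on $C^*(A,A)$.

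First I would construct the map $\Phi : C^*(A,A) \to C^*(A,A^*)$ on the Hochschild level out of the given A$_\infty$-bimodule map $A \to A^*$: by general principles a bimodule map between two A$_\infty$-bimodules induces a chain map of Hochschild-cochain-complexes with values in those bimodules, and symmetry plus non-degeneracy let one promote this to a quasi-isomorphism (the associated graded with respect to the A$_\infty$-length filtration is the classical pairing, which is a quasi-isomorphism on the $E_1$-page by non-degeneracy; a standard spectral sequence/perturbation argument finishes the job). Picking a homotopy inverse $\Psi$ then defines $\Delta := \Psi \circ B \circ \Phi$ on $C^*(A,A)$, well-defined up to chain homotopy.

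Next I would verify, at the level of cohomology, the BV relation
\[
[a,b] = \Delta(a \smile b) - \Delta(a) \smile b - (-1)^{|a|} a \smile \Delta(b).
\]
The strategy is to realize both sides as operations built from trees/graphs of the kind discussed in the earlier sections of the paper, and to exhibit an explicit chain homotopy between the two sides by summing over the one-parameter families of graphs that interpolate between them; the higher components of the $\infty$-inner-product and of the A$_\infty$-structure provide precisely the combinatorial data needed to produce such homotopies. Unitality of $A$ enters because $B$ is defined using $1 \in A$, and one must check that the transferred $\Delta$ depends on the unit only up to exact terms.

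Finally, I would identify the induced bracket with Gerstenhaber's. Both $[\cdot,\cdot]_{\Delta}$ and Gerstenhaber's $[\cdot,\cdot]$ are graded derivations of $\smile$ on $C^*(A,A)$, and both reduce on cohomology to the commutator of the $\smile$-product at the strictly associative lowest order; combined with the fact that on the classical Hochschild cohomology of an associative algebra the derivation-of-$\smile$ that restricts to the commutator on $HH^1$ is unique (Gerstenhaber's result in \cite{G}), this forces the two brackets to agree. The main obstacle is the middle step: constructing the graph-level homotopy that witnesses the BV identity for $\Delta$ and $\smile$ through all higher A$_\infty$- and $\infty$-inner-product operations simultaneously; most of the technical content of the proof in \cite{T} is expected to live there.
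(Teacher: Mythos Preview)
The paper you are working from does not actually prove this statement: it is quoted in the introduction as Theorem~3.1 of the separate paper \cite{T}, and no argument for it appears anywhere in the present text. There is therefore no ``paper's own proof'' to compare your proposal against.

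That said, a brief remark on your sketch. The overall architecture---use the $\infty$-inner-product to build a chain map $C^*(A,A)\to C^*(A,A^*)$, transport $B$ across it, and then verify the BV relation against $\smile$---is the natural one, and is indeed what is carried out in \cite{T}. Two points deserve care. First, your spectral-sequence argument for the quasi-isomorphism is plausible but not quite on the nose: non-degeneracy of the lowest component $\langle\cdot,\cdot\rangle_{0,0}$ gives an isomorphism $A\to A^*$ of modules, but to get a quasi-isomorphism on Hochschild complexes one typically needs convergence of the filtration spectral sequence, which requires some finiteness or boundedness hypothesis you have not stated. Second, your identification of the induced bracket with Gerstenhaber's via a uniqueness principle (``the derivation of $\smile$ restricting to the commutator on $HH^1$ is unique'') is not a theorem in \cite{G} and is false in general; the actual identification in \cite{T} proceeds by a direct chain-level computation showing that the seven-term BV identity for $\Delta$ and $\smile$ produces exactly the Gerstenhaber bracket, not by an abstract uniqueness argument.
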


Combining the theorems from \cite{TZ} and \cite{T}, we obtain a
BV-algebra on the Hochschild-cohomology of the cochains of any Poincar\'e duality
space. This BV-algebra is reminiscent of the BV-algebra from string topology defined on the homology of the free loop space of a compact, oriented manifold $M$, see \cite{CS}.
It is an interesting and non-trivial question, if the identification of the Hochschild-cohomology with the homology of the free loop space also induces an isomorphism of the corresponding BV-algebras. A calculation by L. Menichi in \cite{Me} shows, that in order to recover the BV-algebra on the loop space of $M$, it is in general not enough to consider the cyclic A$_\infty$-algebra on the chain level of $M$, i.e. the A$_\infty$-structure together with the strict Poincar\'e duality inner product on homology. It is our hope, that the notion of $\infty$-inner-products will be strong enough to recover this BV-algebra on the Hochschild-cohomology of $A$, but in any case will help to shed light on this question.

We want to point out, that this BV-algebra is only the tip of the iceberg. In \cite{TZ2} it is shown that there is a whole PROP-action on the Hochschild-cochains of a generalized A$_\infty$-algebra with $\infty$-inner-product. The genus-zero part of this PROP-action constitutes exactly a solution to the cyclic Deligne conjecture for the general case of an A$_\infty$-algebra with $\infty$-inner-product, since it lifts the above BV-algebra on Hochschild-cohomology to the Hochschild-cochain level $C^\ast(A,A^\ast)$. More generally, the full PROP-action in \cite{TZ2} is reminiscent of an action of the moduli-space of Riemann surfaces on the free loop space of a manifold, as it is envisioned by string topology. This shows, that $\infty$-inner-products provide a suitable setup for an algebraic formulation, which captures the ideas from string topology.

The structure of the paper is outlined in the following table of contents.  \tableofcontents

Here are some general remarks on the notation in this paper. Let $R$ denote a commutative ring  with unit. All the spaces ($V$, $W$, $Z$, $A$, $M$, $N$, ...) in this paper are always understood to be graded modules $V=\bigoplus_{i\in \mathbb{Z}}V_{i}$ over $R$, all maps will always be understood as $R$-module maps, and all tensor products will always be assumed over $R$.
The degree of homogeneous elements $v\in V_{i}$ is written as $|v|:=i$, and the
degree of maps $\varphi:V_{i} \to W_{j}$ is written as
$|\varphi|:=j-i$. All tensor-products of maps and their compositions
are understood in a graded way:
$$ (\varphi\otimes\psi)(v\otimes w)= (-1)^{|\psi|\cdot|v|}
   (\varphi(v))\otimes(\psi(w)), $$
$$ (\varphi\otimes\psi)\circ(\chi\otimes\varrho)= (-1)^{|\psi|\cdot|\chi|}
   (\varphi\circ\chi)\otimes(\psi\circ\varrho). $$
All objects $a_{i}$, $v_{i}$,... are assumed to be elements in $A$,
$V$,... respectively, if not stated otherwise.

It will be necessary to look at elements of $V^{\otimes i}\otimes
V^{\otimes j}$. In order to distinguish between the tensor-product
in $V^{\otimes i}$ and the one between $V^{\otimes i}$ and
$V^{\otimes j}$, it is convenient to write the first one as a tuple
$(v_{1},...,v_{i})\in V^{\otimes i}$, and then
$(v_{1},...,v_{i})\otimes (v'_{1},...,v'_{j}) \in V^{\otimes
i}\otimes V^{\otimes j}$. The total degree of $(v_{1},...,v_{i}) \in
V^{\otimes i}$ is given by $ |(v_{1},...,v_{i})|:= \sum_{k=1}^{i}
|v_{k}|$.

Frequently there will be sums of the form $ \sum_{i=0}^{n}
(v_{1},...,v_{i})\otimes(v_{i+1},...,v_{n}).$ Here the convention
will be used that for $i=0$, one has the term $1\otimes
(v_{1},...,v_{n})$ and for $i=n$ the term in the sum is
$(v_{1},...,v_{n})\otimes 1$, with $1=1_{TV}\in TV$. Similarly for
terms $\sum_{i=0}^{k}(v_{1},...,v_{i}) \otimes(v_{i+1},...,v_{k},
w,v_{k+1},...,v_{n})$ the expression for $i=k$ is understood as
$(v_{1},...,v_{k}) \otimes (w,v_{k+1},...,v_{n})$.

\begin{ack}
I would like to thank Dennis Sullivan, who pointed out the
limitations of cyclic A$_\infty$-algebras to me. I am also thankful
to Martin Markl, Jim Stasheff and Mahmoud Zeinalian for many useful comments.
\end{ack}

\section{A$_\infty$-algebras}\label{2}

We first review the definition of A$_\infty$-algebras that are used
in the discussion of this paper.

\begin{defn}\label{2.1}
A \textbf{coalgebra} $(C,\Delta)$ over a ring $R$ consists of an
$R$-module $C$ and a comultiplication  $\Delta:C\to C\otimes C$ of
degree $0$ satisfying coassociativity:
\[
\begin{diagram}
\node{C}\arrow{e,t}{\Delta} \arrow{s,l}{\Delta}
\node{C\otimes C}\arrow{s,r}{\Delta\otimes id}\\
\node{C\otimes C}\arrow{e,b}{id\otimes \Delta} \node{C\otimes
C\otimes C}
\end{diagram}
\]
Then a \textbf{coderivation} on $C$ is a map $f:C\to C$ such that
\[
\begin{diagram}
\node{C}\arrow{e,t}{\Delta} \arrow{s,l}{f}
\node{C\otimes C}\arrow{s,r}{f\otimes id+id\otimes f}\\
\node{C}\arrow{e,b}{\Delta} \node{C\otimes C}
\end{diagram}
\]
\end{defn}

\begin{defn}\label{2.2}
Let $V=\bigoplus_{j\in \mathbb{Z}} V_{j}$ be a graded module over a
given ground ring $R$. The \textbf{tensor-coalgebra} of $V$ over the
ring $R$ is given by
$$ TV:=\bigoplus_{i\geq 0} V^{\otimes i}, $$
$$ \Delta:TV\to TV\otimes TV, \quad
   \Delta(v_{1},...,v_{n}):=\sum_{i=0}^{n} (v_{1},...,v_{i})
    \otimes(v_{i+1},...,v_{n}).$$
Let $A=\bigoplus_{j\in \mathbb{Z}} A_{j}$ be a graded module over the
given ground ring $R$. Define its \textbf{suspension} $sA$ to be the
graded module $sA= \bigoplus_{j\in \mathbb{Z}} (sA)_{j}$ with
$(sA)_{j}:= A_{j-1}$. The suspension map $s:A\to sA$, $s:a\mapsto
sa:=a$ is an isomorphism of degree +1.

Now the \textbf{bar complex} of $A$ is given by $BA:=T(sA)$.

An \textbf{A$_\infty$-algebra} on $A$ is given by a coderivation $D$
on $BA$ of degree $-1$ such that $D^{2}=0$.
\end{defn}

The tensor-coalgebra has the property to lift every module map
$f:TV\to V$ to a coalgebra-map $F:TV \to TV$:
\[
\begin{diagram}
\node{} \node{TV}\arrow{s,r}{projection}\\
\node{TV}\arrow{e,b}{f}\arrow{ne,t}{F} \node{V}
\end{diagram}
\]
A similar property for coderivations on $TV$ will lead to give an
alternative description of A$_\infty$-algebras.
\begin{lem}\label{2.3}
\begin{itemize}
\item [(a)]
Let $\varrho:V^{\otimes n}\to V$, with $n\geq 0$, be a map of degree $|\varrho|$,
which can be viewed as $\varrho:TV\to V$ by letting its only
non-zero component being given by the original $\varrho$ on
$V^{\otimes n}$. Then $\varrho$ lifts uniquely to a coderivation
$\tilde{\varrho} :TV \to TV$ with
\[
\begin{diagram}
\node{} \node{TV}\arrow{s,r}{projection} \\
\node{TV}\arrow{ne,t}{\tilde{\varrho}} \arrow{e,b}{\varrho}
\node{V}
\end{diagram}
\]
by taking
$$ \tilde{\varrho}(v_{1},...,v_{k}):=0, \quad \text{for } k<n, $$
\begin{multline*}
  \quad\quad\quad \tilde{\varrho}(v_{1},...,v_{k}):=\sum_{i=0}^{k-n}
   (-1)^{|\varrho|\cdot(|v_{1}|+...+|v_{i}|)}(v_{1},...,\varrho
   (v_{i+1},...,v_{i+n}),...,v_{k}), \\
\text{for } k\geq n.
\end{multline*}
Thus, $\tilde{\varrho}\mid_{V^{\otimes k}}:V^{\otimes k} \to
V^{\otimes k-n+1} $.
\item [(b)] There is a one-to-one correspondence between
coderivations $\sigma:TV\to TV$ and systems of maps
$\{\varrho_{i}:V^{\otimes i}\to V\}_{i\geq 0}$, given by
$\sigma=\sum_{i\geq 0} \tilde{\varrho_{i}}$.
\end{itemize}
\end{lem}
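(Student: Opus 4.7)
The plan is to prove (a) first by direct verification of the coderivation identity, establish uniqueness, and then deduce (b) as a formal consequence.

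For the existence half of (a), I would compute both sides of the coderivation condition $\Delta \circ \tilde{\varrho} = (\tilde{\varrho}\otimes \mathrm{id}+\mathrm{id}\otimes\tilde{\varrho})\circ\Delta$ on a typical element $(v_1,\dots,v_k)$ and show they agree term-by-term. On the left, using the explicit formula and $\Delta(w_1,\dots,w_m)=\sum_j (w_1,\dots,w_j)\otimes(w_{j+1},\dots,w_m)$, one obtains a double sum indexed by the position $i$ of the inserted block $\varrho(v_{i+1},\dots,v_{i+n})$ and by the splitting point $j$ in the resulting word of length $k-n+1$. On the right, one gets a double sum indexed by the splitting position $j$ of $(v_1,\dots,v_k)$ and by the position $i$ at which $\tilde{\varrho}$ acts (in either the left or right factor). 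Matching terms amounts to partitioning according to whether the block $(v_{i+1},\dots,v_{i+n})$ lies entirely to the left of, entirely to the right of, or straddles the splitting point $j$; the straddling case contributes nothing to either side, while the first two cases match up bijectively. The one nontrivial point is tracking the Koszul sign $(-1)^{|\varrho|\cdot(|v_1|+\cdots+|v_i|)}$ through the commutation $(\mathrm{id}\otimes\tilde{\varrho})\circ\Delta$, which produces an additional sign $(-1)^{|\tilde{\varrho}|\cdot|(v_1,\dots,v_j)|}$ that must cancel correctly against the reindexing.

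For uniqueness in (a), suppose $\tau:TV\to TV$ is any coderivation whose only nonzero component under the projection $TV\to V$ lives on $V^{\otimes n}$ and equals $\varrho$. I would argue by induction on $k$ that $\tau|_{V^{\otimes k}}=\tilde{\varrho}|_{V^{\otimes k}}$. The component of $\tau(v_1,\dots,v_k)$ lying in $V^{\otimes m}$ can be read off from $\mathrm{proj}^{\otimes m}\circ\Delta^{(m-1)}\circ\tau$; applying the coderivation identity iteratively expresses this in terms of $\mathrm{proj}\circ\tau$ on sub-tensors of length $n$, i.e.\ in terms of $\varrho$ alone. Since this recovery formula agrees with the one giving $\tilde{\varrho}$, uniqueness follows. (Equivalently, $\tau-\tilde{\varrho}$ is a coderivation whose projection to $V$ vanishes, and any such coderivation is zero by the same inductive unraveling.)

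For (b), given a coderivation $\sigma:TV\to TV$, define $\varrho_i:V^{\otimes i}\to V$ to be $\mathrm{proj}\circ\sigma|_{V^{\otimes i}}$, where $\mathrm{proj}:TV\to V$ is projection onto the $V$-summand. Applied to any element of $V^{\otimes k}$, only finitely many of the lifts $\tilde{\varrho_i}$ are nonzero (those with $i\le k$), so the sum $\sum_{i\ge 0}\tilde{\varrho_i}$ defines a coderivation whose projection to $V$ on $V^{\otimes i}$ equals $\varrho_i=\mathrm{proj}\circ\sigma|_{V^{\otimes i}}$. By the uniqueness in (a) applied summand by summand (i.e.\ to $\sigma-\sum\tilde{\varrho_i}$, which has vanishing projection to $V$), one concludes $\sigma=\sum_i\tilde{\varrho_i}$. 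The inverse assignment $\{\varrho_i\}\mapsto\sum_i\tilde{\varrho_i}$ is obviously a right inverse, so this is the claimed bijection.

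The main obstacle is the sign verification in part (a): organizing the two double sums so that the Koszul signs line up requires care, but once the bookkeeping is set up by the three cases (block left of, right of, or across the split), the rest of the argument is formal.
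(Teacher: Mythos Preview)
Your proposal is correct. The organizational choice differs from the paper's: you separate existence (a direct term-by-term verification that the stated formula satisfies the coderivation identity) from uniqueness (an inductive argument that any coderivation with the given projection to $V$ must coincide with $\tilde{\varrho}$). The paper instead runs a single inductive argument: writing $\tilde{\varrho}^m$ for the component $TV\to V^{\otimes m}$, it projects the coderivation identity to $V^{\otimes m-1}\otimes V^{\otimes 1}\subset TV\otimes TV$ and observes that this expresses $\tilde{\varrho}^m$ in terms of $\tilde{\varrho}^{m-1}$, so that the explicit formula and its uniqueness emerge simultaneously. Your direct check is slightly more work up front but makes the existence half completely explicit, whereas the paper's inductive derivation is more economical but leaves the reader to confirm that the inductively forced formula indeed satisfies all the projections of the coderivation identity, not only the $(m-1,1)$ one. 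For part~(b) the two arguments are essentially identical.
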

\begin{proof}
\begin{itemize}
\item [(a)]
Denote by $\tilde{\varrho}^{j}$ the component of $\tilde{\varrho}$
mapping $TV\to V^{\otimes j}$. Then
$\tilde{\varrho}^{1},...,\tilde{\varrho}^ {m-1}$ uniquely determine
the component $\tilde{\varrho}^{m}$, using the coderivation property
of $\tilde{\varrho}$.
\begin{eqnarray*}
\quad\quad
\Delta(\tilde{\varrho}(v_{1},...,v_{k}))&=&(\tilde{\varrho}\otimes
id+id\otimes \tilde{\varrho})(\Delta(v_{1},...,v_{k})) \\
&=& \sum_{i=0}^{k}\tilde{\varrho}(v_{1},...,v_{i})\otimes
(v_{i+1},...,v_{k}) \\
& & \,\,\, +(-1)^{|\tilde{\varrho}|\cdot(|v_{1}|+...+|v_{i}|)}
(v_{1},...,v_{i})\otimes \tilde{\varrho} (v_{i+1},...,v_{k}).
\end{eqnarray*}
Projecting both sides to $V^{\otimes i} \otimes
V^{\otimes j} \subset TV\otimes TV$, with $i+j=m$, yields
\begin{multline*}
\quad\quad\quad \Delta(\tilde{\varrho}^{m}(v_{1},...,v_{k}))|_{V^{\otimes i}\otimes V^{\otimes j}}=
\tilde{\varrho}^{i}(v_{1},...,v_{k-j})\otimes (v_{k-j+1},...,v_{k}) \\
 +(-1)^{|\tilde{\varrho}|\cdot(|v_{1}|+...+|v_{i}|)}
(v_{1},...,v_{i})\otimes \tilde{\varrho}^{j} (v_{i+1},...,v_{k}).
\end{multline*}
For $m=i=1$ and $j=0$, this shows that $\tilde{\varrho}^0=0$. $\tilde{\varrho}^1=\varrho$ by the condition of the Lemma, and for $m\geq 2$, choosing $i=m-1,\,\,\, j=1$ uniquely determines $\tilde{\varrho}^m$ by lower components. Thus, an induction shows, that $\tilde{\varrho}^{m}$ is only nonzero on
$V^{\otimes k}$ for $k=m+n-1$, where $\tilde{\varrho}^{m}
(v_{1},...,v_{m+n-1})$ is given by $ \sum_{i=0}^{m-1}
   (-1)^{|\varrho|\cdot(|v_{1}|+...+|v_{i}|)}  (v_{1},..., \varrho
   (v_{i+1},...,v_{i+n}),...,v_{m+n-1})$.

\item [(b)]
The map
$$ \quad\quad\quad
\alpha:\{\{\varrho_{i}:V^{\otimes i}\rightarrow V\}_{i\geq 0}\} \to
Coder(TV), \quad \{\varrho_{i}: V^{\otimes i}\rightarrow V\}_{i\geq
0} \mapsto \sum_{i\geq 0} \tilde{\varrho_{i}} $$ is well defined.
Its inverse $\beta$ is given by
$\beta:\sigma\mapsto\{pr_{V}\circ\sigma|_{V^{\otimes i }}\}_{i\geq
0}$, because the explicit lifting property of (a) shows that
$\beta\circ\alpha=id$, and the uniqueness part of (a) shows that
$\alpha\circ\beta=id$.
\end{itemize}
\end{proof}

Application to Definition \ref{2.2} gives the following
\begin{prop}\label{2.4}
Let $(A,D)$ be an A$_\infty$-algebra, and let $D$ be given by a
system of maps $\{D_{i}:sA^{\otimes i} \to sA\}_{i\geq 1}$, with
$D_{0}=0$. Let $m_{i}:A^{\otimes i} \to A$ be given by $D_{i}=s\circ
m_{i}\circ (s^{-1})^{\otimes i}$. Then the condition $D^{2}=0$ is
equivalent to the following system of equations:
\begin{eqnarray*}
m_{1}(m_{1}(a_{1}))&=&0,\\
m_{1}(m_{2}(a_{1},a_{2}))-m_{2}(m_{1}(a_{1}),a_{2})-(-1)^{|a_{1}|}
m_{2}(a_{1},m_{1}(a_{2}))&=&0,\\
m_{1}(m_{3}(a_{1},a_{2},a_{3}))-m_{2}(m_{2}(a_{1},a_{2}),a_{3})+
m_{2}(a_{1},m_{2}(a_{2},a_{3}))& &\\
+m_{3}(m_{1}(a_{1}),a_{2},a_{3})+(-1)^{|a_{1}|}m_{3}
(a_{1},m_{1}(a_{2}),a_{3})& &\\
+(-1)^{|a_{1}|+|a_{2}|}m_{3}(a_{1},a_{2},m_{1}(a_{3}))&=&0,\\
...\\
\sum_{i=1}^{k} \sum_{j=0}^{k-i+1} (-1)^{\varepsilon} \cdot
m_{k-i+1} (a_{1},...,m_{i} (a_{j},...,a_{j+i-1}),...,a_{k})&=&0,\\
where \,\,\,\varepsilon=i\cdot \sum_{l=1}^{j-1}|a_{l}|+ (j-1)
\cdot(i+1)+k-i\\
...
\end{eqnarray*}
\end{prop}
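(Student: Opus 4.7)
The plan is to reduce the coderivation equation $D^2=0$ to a concrete system via the correspondence of Lemma \ref{2.3}, and then translate from $sA$ to $A$ via the suspension. By Lemma \ref{2.3}(b), the coderivation $D$ on $BA=T(sA)$ is recovered from its components $D_i:sA^{\otimes i}\to sA$. Since $D$ has odd degree, the graded commutator $[D,D]=2D^{2}$ is again a coderivation on $BA$, now of degree $-2$. By the uniqueness clause of Lemma \ref{2.3}(a), any coderivation is determined by its projection to the cogenerators $sA$, so
$$ D^{2}=0 \iff \mathrm{pr}_{sA}\circ D^{2}\big|_{sA^{\otimes k}}=0 \text{ for every } k\geq 1. $$

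First I would compute this projection. Writing $D=\sum_{i\geq 1}\widetilde{D_i}$, the explicit formula in Lemma \ref{2.3}(a) describes $D$ on $sA^{\otimes k}$ as a sum of terms of the form $(sa_1,\ldots,D_i(sa_j,\ldots,sa_{j+i-1}),\ldots,sa_k)$ with Koszul prefactor $(-1)^{|sa_1|+\cdots+|sa_{j-1}|}$ (since $|D_i|=-1$). Applying $D$ a second time and projecting onto $sA$ kills every term except those in which the outer application absorbs all $k-i+1$ remaining tensor slots at once; that is, the outer factor must be $D_{k-i+1}$. This yields the coderivation identity
$$ \sum_{i=1}^{k}\sum_{j=1}^{k-i+1}(-1)^{|sa_1|+\cdots+|sa_{j-1}|}\, D_{k-i+1}\bigl(sa_1,\ldots,D_i(sa_j,\ldots,sa_{j+i-1}),\ldots,sa_k\bigr)=0. $$

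Next I would desuspend. Substituting $D_i=s\circ m_i\circ(s^{-1})^{\otimes i}$ and pushing the $(s^{-1})^{\otimes i}$ past the $sa$'s with the graded convention introduces signs involving $\sum_{l<j}|a_l|$ via $|sa_l|=|a_l|+1$; the degrees $|s|=+1$ and $|m_i|=2-i$ produce further shifts when the outer $s\circ m_{k-i+1}\circ(s^{-1})^{\otimes k-i+1}$ is reassembled around the inner output. Collecting these, the exponent becomes exactly
$$ \varepsilon=i\cdot\sum_{l=1}^{j-1}|a_l|+(j-1)(i+1)+k-i, $$
and the resulting identity on the $m_i$ is precisely the Stasheff relation claimed in the proposition. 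Expanding $k=1,2,3$ reproduces the three explicitly written equations as a consistency check; for example, $k=2$, $i=1$, $j=1$ gives $\varepsilon=1$ and hence the sign $-1$ on $m_2(m_1(a_1),a_2)$, while $k=2$, $i=1$, $j=2$ gives $\varepsilon=|a_1|+3$ and hence $-(-1)^{|a_1|}m_2(a_1,m_1(a_2))$, matching the displayed formulas.

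The main obstacle is the sign bookkeeping in the desuspension step: one must carefully apply the graded Koszul rule to the composition $s\circ m_i\circ(s^{-1})^{\otimes i}(sa_j,\ldots,sa_{j+i-1})$ using $|s^{-1}|=-1$ and $|m_i|=2-i$, then combine this single-term sign with the sign coming from Lemma \ref{2.3}(a) and with the analogous sign produced by the outer $D_{k-i+1}$. Once the one-term calculation is settled, the total exponent $\varepsilon$ assembles by inspection, and the equivalence in the proposition follows.
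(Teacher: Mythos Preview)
Your approach is essentially the same as the paper's, which records only the single sentence ``This follows from Lemma \ref{2.3} after a careful check of the involved signs.'' You have simply spelled out what that sentence means: use Lemma \ref{2.3}(b) to reduce $D^2=0$ to its projection onto $sA$, write out the resulting quadratic relation in the $D_i$'s, and then desuspend to obtain the sign $\varepsilon$. One small remark: your appeal to $[D,D]=2D^2$ to conclude that $D^2$ is a coderivation is unnecessary (and over a general ring $R$ where $2$ may not be invertible it does not quite suffice); it is cleaner to verify directly from the coderivation property of $D$ that $\Delta\circ D^2=(D^2\otimes id+id\otimes D^2)\circ\Delta$, which is a two-line computation using the Koszul rule.
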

\begin{proof}
This follows from Lemma \ref{2.3} after a careful check of the
involved signs.
\end{proof}

\begin{expl}\label{2.5}
Any differential graded algebra $(A,\partial,\mu)$ gives an
A$_\infty$-algebra-structure on $A$ by taking $m_{1}:=\partial$,
$m_{2}:=\mu$ and $m_{k}:=0$ for $k\geq 3$. The equations from
Proposition \ref{2.4} are the defining conditions of a differential
graded algebra:
\begin{eqnarray*}
\partial^{2}(a)&=&0,\\
\partial (a\cdot b)&=& \partial(a)\cdot b+ (-1)^{|a|}a\cdot \partial(b), \\
(a\cdot b)\cdot c &=& a\cdot (b\cdot c). \\
\end{eqnarray*}
There are no higher equations.
\end{expl}

\begin{defn}\label{2.6}
Let $(A,D)$ be an A$_\infty$-algebra. The
\textbf{Hochschild-cochain-complex of $A$} is defined to be the
space $C^{*}(A):=CoDer(BA,BA)$ of coderivations on $BA$ with the
differential $\delta:C^{*}(A)\to C^{*}(A)$ given by
$\delta(f):=[D,f]=D\circ f-(-1)^{|f|}f\circ D$. We have
$\delta^{2}=0$, because with $D$ of degree $-1$ and $D^{2}=0$, it
follows that $\delta^{2}(f)=[D,D\circ f-(-1)^{|f|}f\circ D]= D\circ
D\circ f -(-1)^{|f|} D\circ f\circ D -(-1)^{|f|+1} D\circ f \circ D
+ (-1)^{|f|+|f|+1} f\circ D \circ D=0$.
\end{defn}

\section{A$_\infty$-bimodules}\label{3}

Let $(A,D)$ be an A$_\infty$-algebra. We now define the concept of
an A$_\infty$-bimodule over $A$, which was also considered in
\cite{GJ} and \cite{M}. This should be a generalization of two
facts. First, it is possible to define the
Hochschild-cochain-complex for any algebra with values in a
bimodule, which we would also like to do in the A$_\infty$ case.
Second, any algebra is a bimodule over itself by left- and
right-multiplication, which should also hold in the A$_\infty$ case.
The following space and map are important ingredients.
\begin{defn}\label{3.1}
For modules $V$ and $W$ over $R$, we define
$$T^{W}V:=R\oplus\bigoplus_{k\geq 0, l\geq 0} V^{\otimes k} \otimes W
\otimes V^{\otimes l}.$$
Furthermore, let
\begin{eqnarray*}
\Delta^{W}:T^{W}V&\to & (TV\otimes T^{W}V)\oplus (T^{W}V\otimes TV),\\
\Delta^{W}(v_{1},...,v_{k},w,v_{k+1},...,v_{k+l})&:=&
 \sum_{i=0}^{k}(v_{1},...,v_{i}) \otimes(v_{i+1},...,w,...,v_{n})\\
 & &+\sum_{i=k}^{k+l} (v_{1},...,w,...,v_{i})
 \otimes(v_{i+1},...,v_{k+l}).
\end{eqnarray*}
Again for modules $A$ and $M$ let $B^{M}A$ be given by $T^{sM}sA$,
where $s$ is the suspension from Definition \ref{2.2}.
\end{defn}\label{3.2}
Observe that $T^{W}V$ is not a coalgebra, but rather a bi-comodule
over $TV$. We need the definition of a coderivation from $TV$ to
$T^{W}V$.
\begin{defn} A \textbf{coderivation} from $TV$ to $T^{W}V$ is a map
$f:TV\to T^{W}V$ so that the following diagram commutes:
\[
\begin{diagram}
\node{TV}\arrow{s,l}{f}\arrow{e,t}{\Delta}
  \node{TV\otimes TV}\arrow{s,r}{id\otimes f+f\otimes id} \\
\node{T^{W}V}\arrow{e,b}{\Delta^{W}} \node{(TV\otimes
T^{W}V)\oplus (T^{W}V\otimes TV)}
\end{diagram}
\]
For modules $A$ and $M$ let $C^{*}(A,M):=CoDer(BA,B^{M}A)$ be the
space of coderivations in the above sense, called the
\textbf{Hochschild-cochain-complex of $A$ with values in $M$}.
\end{defn}
\begin{lem}\label{3.3}
\begin{itemize}
\item [(a)]
Let $\varrho:V^{\otimes n}\to W$ be a map of degree $|\varrho|$,
which can be viewed as a map $\varrho:TV\to W$ by letting its only
non-zero component being given by the original $\varrho$ on
$V^{\otimes n}$. Then $\varrho$ lifts uniquely to a coderivation
$\tilde{\varrho} :TV \to T^{W}V$ with
\[
\begin{diagram}
\node{} \node{T^{W}V}\arrow{s,r}{projection} \\
\node{TV}\arrow{ne,t}{\tilde{\varrho}} \arrow{e,b}{\varrho}
\node{W}
\end{diagram}
\]
by taking
$$ \tilde{\varrho}(v_{1},...,v_{k}):=0, \quad \text{for } k<n, $$
\begin{multline*}
 \quad\quad\quad\tilde{\varrho}(v_{1},...,v_{k}):=\sum_{i=0}^{k-n}
   (-1)^{|\varrho|\cdot(|v_{1}|+...+|v_{i}|)}
   (v_{1},...,\varrho(v_{i+1},...,v_{i+n}),...,v_{k}),\\
 \text{for } k\geq n.
\end{multline*}
Thus $\tilde{\varrho}\mid_{V^{\otimes k}}:V^{\otimes k} \to
\bigoplus_{i+j=k-n} V^{\otimes i} \otimes W \otimes V^{\otimes j}$.
\item [(b)] There is a one-to-one correspondence between
coderivations $\sigma:TV\to T^{W}V$ and systems of maps
$\{\varrho_{i}:V^{\otimes i}\to W\}_{i\geq 0}$, given by
$\sigma=\sum_{i\geq 0} \tilde{\varrho_{i}}$.
\end{itemize}
\end{lem}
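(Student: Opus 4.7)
The plan is to mirror the proof of Lemma~\ref{2.3} almost verbatim; the only genuinely new feature is that $T^{W}V$ carries a bi-comodule structure over $TV$ instead of a coalgebra structure, so the coderivation square now produces two output summands and the induction must be set up on a bigrading rather than a single grading.

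For part~(a), I would bigrade the target by writing $\tilde{\varrho}^{(i,l)}$ for the component of $\tilde{\varrho}$ landing in $V^{\otimes i}\otimes W\otimes V^{\otimes l}\subset T^{W}V$. The projection condition in the diagram directly fixes $\tilde{\varrho}^{(0,0)}=\varrho$, vanishing on $V^{\otimes k}$ for $k\neq n$. Applying the coderivation square $\Delta^{W}\circ\tilde{\varrho}=(id\otimes\tilde{\varrho}+\tilde{\varrho}\otimes id)\circ\Delta$ to an arbitrary input $(v_{1},\ldots,v_{k})$, the right hand side is a graded sum of terms $(v_{1},\ldots,v_{p})\otimes\tilde{\varrho}(v_{p+1},\ldots,v_{k})$ in $TV\otimes T^{W}V$ and $\tilde{\varrho}(v_{1},\ldots,v_{p})\otimes(v_{p+1},\ldots,v_{k})$ in $T^{W}V\otimes TV$. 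Projecting onto a summand $V^{\otimes p}\otimes V^{\otimes i}\otimes W\otimes V^{\otimes l}$ with $p\geq 1$ expresses $\tilde{\varrho}^{(p+i,l)}$ on $V^{\otimes k}$ in terms of $\tilde{\varrho}^{(i,l)}$ on $V^{\otimes k-p}$, and dually on the other side. Hence the coderivation condition, starting from $\tilde{\varrho}^{(0,0)}=\varrho$, determines every $\tilde{\varrho}^{(i,l)}$ uniquely by induction on $i+l$, and the resulting closed formula is exactly the one stated in the lemma.

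The main obstacle is the consistency check. For $i,l\geq 1$, the component $\tilde{\varrho}^{(i,l)}$ is reached both through the left projection (from $\tilde{\varrho}^{(i-1,l)}$) and through the right projection (from $\tilde{\varrho}^{(i,l-1)}$), and one must see that these two prescriptions agree and, moreover, that the resulting map satisfies the full bi-comodule square rather than just the particular projections used in the induction. The cleanest route is to take the stated explicit formula as the definition of $\tilde{\varrho}$ and verify the square by splitting $\Delta^{W}$ of the output into two cases according to whether the cut point lies to the left or to the right of the inserted $\varrho(v_{i+1},\ldots,v_{i+n})$; the sign bookkeeping is identical in flavor to that in the proof of Lemma~\ref{2.3}(a), and after reindexing each of the two cases matches one of the two summands of $(id\otimes\tilde{\varrho}+\tilde{\varrho}\otimes id)\circ\Delta$.

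For part~(b), I would define $\alpha(\{\varrho_{i}\})=\sum_{i\geq 0}\tilde{\varrho_{i}}$ and the candidate inverse $\beta(\sigma)=\{pr_{W}\circ\sigma|_{V^{\otimes i}}\}_{i\geq 0}$, where $pr_{W}:T^{W}V\to W$ is the projection onto the $(0,0)$-summand. That $\beta\circ\alpha=id$ is read off immediately from the explicit formula in (a), while $\alpha\circ\beta=id$ follows from the uniqueness statement of (a) applied to each homogeneous piece of $\sigma$.
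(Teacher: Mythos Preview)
Your proposal is correct and follows essentially the same strategy as the paper: determine the components of $\tilde{\varrho}$ inductively from the coderivation square, then for (b) use the same $\alpha$/$\beta$ pair of maps and appeal to the existence/uniqueness of (a).

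The only organizational difference worth noting is that the paper indexes by the \emph{total} grading, writing $\tilde{\varrho}^{m}$ for the component landing in $\bigoplus_{r+s=m}V^{\otimes r}\otimes W\otimes V^{\otimes s}$ (together with $\tilde{\varrho}^{-1}$ for the $R$-summand of $T^{W}V$, which it shows vanishes by projecting to $R\otimes TV$). With this coarser grading, the induction step $i=m-1$, $j=1$ determines $\tilde{\varrho}^{m}$ in one stroke and the consistency issue you raise for the bigrading $(i,l)$ simply does not arise. Your route via the finer bigrading is equally valid---and your resolution, to take the explicit formula as a definition and verify the square directly, is the natural fix---but the paper's single grading is a bit slicker here. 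You should also remember to dispose of the $R$-component of $T^{W}V$, which your bigraded notation $\tilde{\varrho}^{(i,l)}$ does not cover.
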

\begin{proof}
\begin{itemize}
\item [(a)]
The proof is similar to the one of Lemma \ref{2.3} (a). Let
$\tilde{\varrho}^{j}$ be the component of $\tilde{\varrho}$ mapping
$TV\to\bigoplus_{r+s=j} V^{\otimes r} \otimes W\otimes V^{\otimes
s}$, and $\tilde{\varrho}^{-1}$ the component $TV\to R$. The equation
\begin{eqnarray*}
\quad\quad\quad
\Delta^{W}(\tilde{\varrho}(v_{1},...,v_{k}))&=&(\tilde{\varrho}\otimes
id+id\otimes \tilde{\varrho})(\Delta(v_{1},...,v_{k})) \\
&=& \sum_{i=0}^{k}\tilde{\varrho}(v_{1},...,v_{i})\otimes
(v_{i+1},...,v_{k}) \\
& & \,\,\, +(-1)^{|\tilde{\varrho}|\cdot(|v_{1}|+...+|v_{i}|)}
(v_{1},...,v_{i})\otimes \tilde{\varrho} (v_{i+1},...,v_{k})
\end{eqnarray*}
projected to $R\otimes TV$ shows that $\tilde{\varrho}^{-1}=0$. $\tilde{\varrho}^{0}=\varrho$ is uniquely determined by the statement of the Lemma, and projecting for fixed $i+j=m$, to the component
\begin{multline*}
\quad\quad\quad \bigoplus_{r+s=i} (V^{\otimes r}\otimes W\otimes
V^{\otimes s})\otimes V^{\otimes j}+V^{\otimes j}\otimes \bigoplus_{r+s=i} (V^{\otimes r}\otimes W\otimes V^{\otimes s})\\
\subset T^{W}V\otimes TV+TV\otimes T^{W}V,
\end{multline*}
shows that $ \Delta^{W}(\tilde{\varrho}^{m}(v_{1},...,v_{k}))|_{\bigoplus_{r+s=i} (V^{r}\otimes W\otimes V^{s})\otimes V^{j}+V^{j}\otimes \bigoplus_{r+s=i} (V^{r}\otimes W\otimes V^{s})}$ is given by 
\begin{equation*}
\quad\quad  \tilde{\varrho}^{i}(v_{1},...,v_{k-j})\otimes (v_{k-j+1},...,v_{k})+(-1)^{|\tilde{\varrho}|\cdot(|v_{1}|+...+|v_{j}|)} (v_{1},...,v_{j})\otimes \tilde{\varrho}^i (v_{j+1},...,v_{k}) .
\end{equation*}
For $m\geq 1$, choosing $i=m-1,\,\,\, j=1$ uniquely determines $\tilde{\varrho}^{m}$ by lower components. Thus, an induction shows, that $\tilde{\varrho}^{m}$ is only nonzero on
$V^{\otimes k}$ for $k=m+n-1$, where $ \tilde{\varrho}^{m}
(v_{1},...,v_{m+n-1})$ is given by $\sum_{i=0}^{m-1}
   (-1)^{|\varrho|\cdot(|v_{1}|+...+|v_{i}|)}\cdot \\ (v_{1},...,\varrho
   (v_{i+1},...,v_{i+n}),...,v_{m+n-1})$.
\item [(b)]
Then maps
\begin{eqnarray*}
& \alpha:\{\{\varrho_{i}:V^{\otimes i}\rightarrow W\}_{i\geq 0}\}
\to Coder(TV,T^{W}V), & \{\varrho_{i}: V^{\otimes i}\rightarrow
W\}_{i\geq 0} \mapsto
\sum_{i\geq 0} \tilde{\varrho_{i}} \\
& \beta:Coder(TV,T^{W}V) \to \{\{\varrho_{i}: V^{\otimes
i}\rightarrow W\}_{i\geq 0}\}, &
\sigma\mapsto\{pr_{W}\circ\sigma|_{V^{\otimes i }}\}_{i\geq 0}
\end{eqnarray*}
are inverse to each other by (a).
\end{itemize}
\end{proof}
We put a differential on $C^{*}(A,M)$, similar to the one from
section \ref{2}.
\begin{prop}\label{3.4}
Let $(A,D)$ be an A$_{\infty}$-algebra and $M$ be a graded
module. Let $D^{M}:B^{M}A\to B^{M}A$ be a map of degree $-1$. Then
the induced map $\delta^{M}:CoDer(BA,B^{M}A) \to CoDer(BA,B^{M}A)$,
given by $\delta^{M}(f):=D^{M}\circ f-(-1)^{|f|}f\circ D$, is
well-defined, (i.e. it maps coderivations to coderivations,) if and
only if the following diagram commutes:
\begin{equation}\label{eqn-3.1}
\begin{diagram}
\node{B^{M}A}\arrow{s,l}{D^{M}}\arrow{e,t}{\Delta^{M}}
  \node{(BA\otimes B^{M}A)\oplus (B^{M}A\otimes BA)}\arrow{s,r}
  {(id\otimes D^{M}+D\otimes id) \oplus (D^{M}\otimes id+id\otimes D)} \\
\node{B^{M}A}\arrow{e,b}{\Delta^{M}} \node{(BA\otimes
  B^{M}A)\oplus (B^{M}A\otimes BA)}
\end{diagram}
\end{equation}
\end{prop}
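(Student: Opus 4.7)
The plan is to directly verify both directions of the iff by expanding the coderivation condition for $\delta^M(f)$ and reducing its failure to the defect of diagram (\ref{eqn-3.1}) composed with $f$.

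First I would unpack the required identity, $\Delta^M \circ \delta^M(f) = (id \otimes \delta^M(f) + \delta^M(f) \otimes id) \circ \Delta$, by substituting $\delta^M(f) = D^M \circ f - (-1)^{|f|} f \circ D$ and invoking (i) the coderivation property of $f$, $\Delta^M \circ f = (id \otimes f + f \otimes id) \circ \Delta$, and (ii) the coderivation property of $D$, $\Delta \circ D = (id \otimes D + D \otimes id) \circ \Delta$. This rewrites $\Delta^M \circ f \circ D$ as a four-term sum, yielding (after sign-bookkeeping with $|D| = -1$ and the graded composition rule) the terms $id \otimes (f \circ D)$, $(f \circ D) \otimes id$, $(-1)^{|f|} D \otimes f$, and $f \otimes D$, with an overall factor of $(-1)^{|f|}$ in front. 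The terms $id \otimes (f \circ D)$ and $(f \circ D) \otimes id$ appear in the analogous expansion of the right-hand side and cancel.

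After cancellation the difference reduces to
\begin{equation*}
\Delta^M \circ \delta^M(f) - (id \otimes \delta^M(f) + \delta^M(f) \otimes id) \circ \Delta = E \circ f,
\end{equation*}
where
\begin{equation*}
E := \Delta^M \circ D^M - \bigl[(id \otimes D^M + D \otimes id) \oplus (D^M \otimes id + id \otimes D)\bigr] \circ \Delta^M
\end{equation*}
is precisely the defect of (\ref{eqn-3.1}). Hence $E = 0$ immediately forces $\delta^M(f)$ to be a coderivation for every coderivation $f$. Conversely, if $\delta^M$ is well-defined, then $E \circ f = 0$ for every $f \in CoDer(BA, B^{M}A)$. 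Lemma \ref{3.3}(b) provides enough coderivations to probe $B^{M}A$: given any element $x = (sa_1, \ldots, sa_k, sm, sa_{k+1}, \ldots, sa_{k+l})$, one can choose $\varrho_n : (sA)^{\otimes n} \to sM$ concentrated on a specific input so that $x$ appears in $\tilde{\varrho}_n$'s image, forcing $E(x) = 0$ and hence $E = 0$.

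The main obstacle is the sign tracking in the graded tensor composition. Using $(\varphi \otimes \psi) \circ (\chi \otimes \varrho) = (-1)^{|\psi| \cdot |\chi|} (\varphi \circ \chi) \otimes (\psi \circ \varrho)$ with $|D| = -1$, the cross terms $(-1)^{|f|} D \otimes f$ and $f \otimes D$ arising from $(id \otimes f + f \otimes id) \circ (id \otimes D + D \otimes id)$ on the left must consolidate with the cross terms $D \otimes f$ and $(-1)^{|f|} f \otimes D$ produced by $[(id \otimes D^M + D \otimes id) \oplus (D^M \otimes id + id \otimes D)] \circ (id \otimes f + f \otimes id)$ on the right, once the overall $(-1)^{|f|}$ factor on the LHS is taken into account. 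Ensuring that all eight cross terms reassemble exactly into $E \circ f$, with no residual terms, is the bulk of the computation.
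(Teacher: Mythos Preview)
Your approach is essentially the same as the paper's: expand the coderivation condition for $\delta^M(f)$, use the coderivation properties of $f$ and $D$ to rewrite $\Delta^M\circ f\circ D$, reduce the obstruction to $E\circ f=0$ where $E$ is the defect of diagram~\eqref{eqn-3.1}, and then invoke Lemma~\ref{3.3} to pass from ``$E\circ f=0$ for all coderivations $f$'' to $E=0$. The paper carries out exactly these steps with the same sign computations, so your proposal matches the original proof.
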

\begin{proof}
Let $f:BA\to B^{M}A$ be a coderivation. Then, $\delta^{M}(f)$ is a
coderivation, if
\begin{multline*}
 (id\otimes\delta^{M}(f)+\delta^{M}(f)\otimes id)\circ\Delta=
\Delta^{M}\circ\delta^{M}(f) \text{, i.e.}\\
 (id\otimes(D^{M}\circ f)-(-1)^{|f|}id\otimes(f\circ D)+
    (D^{M}\circ f)\otimes id-(-1)^{|f|}
    (f\circ D)\otimes id)\circ\Delta \\= \Delta^{M}\circ D^{M}\circ f-(-1)^{|f|}
    \Delta^{M}\circ f \circ D.
\end{multline*}
Using the coderivation property for $f$ and $D$, we get
\begin{eqnarray*}
\Delta^{M}\circ f \circ D &=&
 (id\otimes f)\circ\Delta\circ D + (f\otimes id)\circ\Delta\circ D\\
 &=& (id\otimes(f\circ D) +(-1)^{|f|} D\otimes f + f\otimes D +
      (f\circ D)\otimes id) \circ \Delta,
\end{eqnarray*}
so that the requirement for $\delta^{M}(f)$ being a coderivation
reduces to
\begin{eqnarray*}
\Delta^{M}\circ D^{M}\circ f&=& (id\otimes(D^{M}\circ f)+
(D^{M}\circ f)\otimes id+ D\otimes f +(-1)^{|f|} f\otimes D)\circ \Delta\\
&=& (id\otimes D^{M}+ D\otimes id)\circ(id\otimes f)\circ \Delta\\
&& +
    (D^{M}\otimes id+ id\otimes D)\circ(f\otimes id)\circ \Delta\\
&=& (id\otimes D^{M}+ D\otimes id)\circ\Delta^{M} \circ f +
    (D^{M}\otimes id+ id\otimes D)\circ\Delta^{M} \circ f.
\end{eqnarray*}
Thus, we get the following condition for $D^{M}$,
$$ \Delta^{M}\circ D^{M}\circ f = (id\otimes D^{M}+ D\otimes id+
    D^{M}\otimes id+ id\otimes D)\circ\Delta^{M} \circ f $$
for all coderivations $f:TA\to T^{M}A$. With Lemma \ref{3.3} this
condition reduces to $ \Delta^{M}\circ D^{M}= (id\otimes D^{M}+
D\otimes id+ D^{M}\otimes id+ id\otimes D)\circ\Delta^{M}$, which is
the claim.
\end{proof}
We can describe $D^{M}$ by a system of maps.
\begin{lem}\label{3.5}
\begin{itemize}
\item [(a)]
Let $V$ be a module, and let $\psi$ be a coderivation on $TV$
 with associated system of maps $\{\psi_{i}:V^{\otimes i} \to
 V\}_{i\geq 1}$ from Lemma \ref{2.3}. Then any map $\varrho:T^{W}V\to W$
 given by $\varrho=\sum_{k\geq0, l\geq0} \varrho_{k,l}$, with
 $\varrho_{k,l}: V^{\otimes k}\otimes W \otimes V^{\otimes l}
 \to W$, lifts uniquely to a map
 $\tilde{\varrho} :T^{W}V \to T^{W}V$
\[
\begin{diagram}
\node{} \node{T^{W}V}\arrow{s,r}{projection} \\
\node{T^{W}V}\arrow{e,b}{\varrho} \arrow{ne,t}{\tilde{\varrho}}
\node{W}
\end{diagram}
\]
which makes the following diagram commute
\begin{equation}\label{eqn-3.2}
\begin{diagram}
\node{T^{W}V}\arrow{s,l}{\tilde{\varrho}}\arrow{e,t}{\Delta^{W}}
  \node{(TV\otimes T^{W}V)\oplus (T^{W}V\otimes TV)}\arrow{s,r}
  {(id\otimes \tilde{\varrho}+\psi\otimes id) \oplus (\tilde{\varrho}
  \otimes id+id\otimes \psi)} \\
\node{T^{W}V}\arrow{e,b}{\Delta^{W}} \node{(TV\otimes
  T^{W}V)\oplus (T^{W}V\otimes TV)}
\end{diagram}
\end{equation}
This map is given
\begin{multline*}
 \tilde{\varrho}(v_{1},...,v_{k},w,v_{k+1},...,v_{k+l}) \\
:= \sum_{i=1}^{k} \sum_{j=1}^{k-i+1}
     (-1)^{|\psi_{i}|\sum_{r=1}^{j-1}|v_{r}|}
     (v_{1},...,\psi_{i}(v_{j},...,v_{i+j-1}),...,w,...,v_{k+l}) \quad\quad\quad\quad\quad \\
   +\sum_{i=0}^{k} \sum_{j=0}^{l}
     (-1)^{|\varrho_{i,j}|\sum_{r=1}^{k-i}|v_{r}|}
     (v_{1},...,\varrho_{i,j}(v_{k-i+1},...,w,...,v_{k+j}),...,v_{k+l}) \quad\quad\quad\quad \\
   +\sum_{i=1}^{l} \sum_{j=1}^{l-i+1}
     (-1)^{|\psi_{i}|(|w|+\sum_{r=1}^{k+j-1}|v_{r}|)}
     (v_{1},...,w,...,\psi_{i}(v_{k+j},...,v_{k+i+j-1}),...,v_{k+l}).
\end{multline*}
(Notice that the condition of diagram \eqref{eqn-3.2} is not
linear.)
\item [(b)]
There is a one-to-one correspondence between maps $\sigma:T^{W}V\to
T^{W}V$ that make diagram \eqref{eqn-3.2} commute and maps
$\varrho=\sum\varrho_{k,l}$ from (a), given by
$\sigma=\tilde{\varrho}$.
\end{itemize}
\end{lem}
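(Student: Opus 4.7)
The plan is to mirror the proofs of Lemma~\ref{2.3}(a) and Lemma~\ref{3.3}(a): decompose $\tilde{\varrho}$ by the grading of its target, then use the commuting square \eqref{eqn-3.2} to pin down each graded piece inductively from lower ones together with the fixed data $\psi$ and $\varrho$.

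For part~(a), I would write $\tilde{\varrho}=\sum_{m\geq -1}\tilde{\varrho}^{m}$, where $\tilde{\varrho}^{-1}$ denotes the $R$-valued component and, for $m\geq 0$, $\tilde{\varrho}^{m}$ is the component landing in $\bigoplus_{r+s=m} V^{\otimes r}\otimes W\otimes V^{\otimes s}$. Projecting \eqref{eqn-3.2} onto the $R\otimes TV$ and $TV\otimes R$ summands forces $\tilde{\varrho}^{-1}=0$, while the normalizing requirement $\mathrm{pr}_{W}\circ\tilde{\varrho}=\varrho$ fixes $\tilde{\varrho}^{0}=\varrho$. For $m\geq 1$, projecting \eqref{eqn-3.2} onto a summand that cuts off just a single $V$-factor at one end, such as $\bigoplus_{r+s=m-1}(V^{\otimes r}\otimes W\otimes V^{\otimes s})\otimes V$, expresses the value of $\tilde{\varrho}^{m}$ on an arbitrary element $(v_{1},\ldots,w,\ldots,v_{k+l})$ in terms of the lower pieces $\tilde{\varrho}^{j}$ (with $j<m$) and the given $\psi_{i}$ and $\varrho_{k,l}$. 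This gives uniqueness.

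For existence, I would verify that the explicit formula in the statement satisfies \eqref{eqn-3.2}. Its three sums correspond exactly to whether the internal operation occurs entirely to the left of~$w$ (a $\psi_{i}$), across~$w$ (a $\varrho_{k,l}$), or entirely to the right of~$w$ (another $\psi_{i}$). After applying $\Delta^{W}$ to this expression, the resulting terms naturally partition according to whether the cut of $\Delta^{W}$ separates the operation from the position of~$w$ or not: the separating terms reassemble into $(\psi\otimes id)\oplus (id\otimes \psi)$ composed after $\Delta^{W}$, while the non-separating terms reassemble into $(id\otimes\tilde{\varrho})\oplus(\tilde{\varrho}\otimes id)$ composed after $\Delta^{W}$. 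The signs come out correctly by the Koszul rule, just as in the analogous verifications of Lemma~\ref{2.3} and Lemma~\ref{3.3}.

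Part~(b) is then formal: the assignments $\alpha:\{\varrho_{k,l}\}\mapsto\tilde{\varrho}$ and $\beta:\sigma\mapsto\{\mathrm{pr}_{W}\circ \sigma|_{V^{\otimes k}\otimes W\otimes V^{\otimes l}}\}$ are mutual inverses, since the explicit formula in~(a) gives $\beta\circ\alpha=id$ and the uniqueness part of~(a) gives $\alpha\circ\beta=id$. The main obstacle is the combinatorial sign and index bookkeeping in the existence half of~(a), since unlike Lemmas~\ref{2.3} and~\ref{3.3} the distinguished factor $w$ partitions the word into three regions in which operations can act, and the cut performed by $\Delta^{W}$ must be matched against the position of the operation in each of three cases; this is what forces the three-sum shape of the formula and it is where all the nontrivial checking lives.
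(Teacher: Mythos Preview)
Your proposal is correct and follows essentially the same route as the paper: decompose $\tilde{\varrho}$ by the $W$-grading of its target, kill the $R$-component by projecting to $R\otimes TV$, fix $\tilde{\varrho}^{0}=\varrho$ by the lifting condition, and for $m\geq 1$ project \eqref{eqn-3.2} onto the summands with a single $V$ split off to determine $\tilde{\varrho}^{m}$ recursively from $\tilde{\varrho}^{m-1}$ and the $\psi^{j}$'s; part~(b) is then the same $\alpha$/$\beta$ argument. The only cosmetic difference is that you separate uniqueness and existence explicitly (the latter by directly checking that the three-sum formula satisfies \eqref{eqn-3.2}), whereas the paper folds both into the single phrase ``an induction shows the claim of the Lemma''; one small caution is that the single projection you name, $\bigl(\bigoplus_{r+s=m-1}V^{\otimes r}\otimes W\otimes V^{\otimes s}\bigr)\otimes V$, does not see the $s=0$ component of $\tilde{\varrho}^{m}$, so you do need the companion projection on the other side (as your phrase ``at one end'' already suggests, and as the paper does by projecting onto both summands simultaneously).
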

\begin{proof}
\begin{itemize}
\item [(a)]
As in the Lemmas \ref{2.3} and \ref{3.3}, we denote by $\tilde{\varrho}^{j}$, $j \geq 0$, the component of $\tilde{\varrho}$ mapping $T^{W}V\to \bigoplus _{k+l=j} V^{\otimes k}\otimes W\otimes V^{\otimes l}$ and by $\tilde{\varrho}^{-1}$ the component $T^W V\to R$. $\psi^{j}$, for $j\geq 1$, denotes the component of $\psi$ mapping
$TV\to V^{\otimes j}$. Then $\tilde {\varrho}^{m}$ is uniquely
determined by $\tilde{\varrho}^{0},...,\tilde{\varrho}^ {m-1}$.
\begin{multline*}
\quad\quad\quad \Delta^{W}(\tilde{\varrho}
(v_{1},...,v_{k},w,v_{k+1},...,v_{k+l})) \\
=(id\otimes \tilde{\varrho}+\psi\otimes id+ \tilde{\varrho}
   \otimes id+id\otimes\psi) (\Delta^{W}
   (v_{1},...,v_{k},w,v_{k+1},...,v_{k+l}))
\end{multline*}
\begin{eqnarray*}
&=& \sum_{i=0}^{k}(-1)^{|\tilde{\varrho}|\sum_{r=1}^{i}|v_{r}|}
    (v_{1},...,v_{i}) \otimes \tilde{\varrho}(v_{i+1},...,w,...,v_{k+l})\\
& & +\sum_{i=0}^{k}\psi(v_{1},...,v_{i}) \otimes
(v_{i+1},...,w,...,v_{k+l})\\
& &  +\sum_{i=k}^{k+l}\tilde{\varrho}(v_{1},...,w,...,v_{i})
    \otimes (v_{i+1},...,v_{k+l})\\
& & +\sum_{i=k}^{k+l} (-1)^{|\psi|(|w|+\sum_{r=1}^{i}|v_{r}|)}
    (v_{1},...,w,...,v_{i}) \otimes \psi(v_{i+1},...,v_{k+l}).\,\,
\end{eqnarray*}
Projecting both sides to $R\otimes TV$ shows that $\tilde{\varrho}^{-1}=0$, and projecting for fixed $i+j=m$, to the component
\begin{multline*}
\quad\quad\quad V^{\otimes j}\otimes \bigoplus_{r+s=i} (V^{\otimes r}\otimes W\otimes V^{\otimes s})+ \bigoplus_{r+s=i} (V^{\otimes r}\otimes W\otimes V^{\otimes s})\otimes V^{\otimes j} \\
\subset T^{W}V\otimes TV+TV\otimes T^{W}V,
\end{multline*}
shows that $ \Delta^{W}(\tilde{\varrho}^{m}(v_{1},...,w,...,v_{k}))|_{V^{\otimes j}\otimes \bigoplus_{r+s=i} (V^{\otimes r}\otimes W\otimes V^{\otimes s})+ \bigoplus_{r+s=i} (V^{\otimes r}\otimes W\otimes V^{\otimes s})\otimes V^{\otimes j} }$ is given by 
\begin{multline*}
\pm (v_{1},...,v_{j})\otimes \tilde{\varrho}^i (v_{j+1},...,w,...,v_{k+l}) + \psi^{j}(v_1,...,v_{k+l-i})\otimes (v_{k+l-i+1},...,w,...,v_{k+l}) \\
+ \tilde{\varrho}^{i}(v_{1},...,w,...,v_{k+l-j})\otimes (v_{k+l-j+1},...,v_{k+l})\pm (v_1,...,w,...v_i)\otimes \psi^j (v_{i+1},...,v_{k+l}).
\end{multline*}
For $m\geq 1$, choosing $i=m-1,\,\,\, j=1$ uniquely determines $\tilde{\varrho}^{m}$ by lower components, and the $\psi^j$'s. Thus, an induction shows the claim of the Lemma.
\item [(b)]
Let $X:=\{\sigma:T^{W}V\to T^{W}V\,\,|\,\, \sigma$ makes diagram
\eqref{eqn-3.2} commute$\}$. Then
\begin{eqnarray*}
\alpha:\{\varrho:T^{W}V\to W\}\to X,
& & \varrho \mapsto \tilde{\varrho}, \\
\beta:X\to \{\varrho:T^{W}V\to W\}, & & \sigma \mapsto
pr_{W}\circ\sigma
\end{eqnarray*}
are inverse to each other by (a).
\end{itemize}
\end{proof}
\begin{defn}\label{3.6}
Let $(A,D)$ be an A$_{\infty}$-algebra. Then an
\textbf{A$_{\infty}$-bimodule} $(M,D^{M})$ consists of a graded
module $M$ together with a map $D^{M}:B^{M}A\to B^{M}A$ of degree
$-1$, which makes the diagram \eqref{eqn-3.1} of Proposition
\ref{3.4} commute, and satisfies $(D^{M})^{2} =0$.

By Proposition \ref{3.4}, we may put the differential $\delta^{M}:
CoDer(TA,T^{M}A) \to CoDer(TA,T^{M}A)$, $\delta(f) :=D^{M}\circ
f-(-1)^{|f|}f\circ D$ on the Hochschild-cochain-complex. It
satisfies $(\delta^{M})^{2}=0$, because with $(D^{M})^{2} =0$, we
get $(\delta^{M})^{2}(f)=D^{M}\circ D^{M}\circ
f-(-1)^{|f|}D^{M}\circ f\circ D -(-1)^{|f|+1} D^{M}\circ f \circ D +
(-1)^{|f|+|f|+1} f\circ D\circ D =0$.

The definition of an A$_{\infty}$-bimodule was already stated in
\cite{GJ} section 3 and also in \cite{M}.
\end{defn}
\begin{prop}\label{3.7}
Let $(A,D)$ be an A$_\infty$-algebra, and let $\{m_{i}:A^{\otimes i}
\to A\}_{i\geq 1}$ be the system of maps associated to $D$ by
Proposition \ref{2.4}, with $m_{0}=0$. Let $(M,D^{M})$ be an
A$_\infty$-bimodule over $A$, and let $\{D^{M}_{k,l} :sA^{\otimes
k}\otimes sM \otimes sA^{\otimes l} \to sM\}_{k\geq 0, l\geq 0}$ be
the system of maps associated to $D^{M}$ by Lemma \ref{3.5} (b). Let
$b_{k,l}:A^{\otimes k}\otimes M \otimes A^{\otimes l} \to M$ be the
induced maps by $D^{M}_{k,l}=s\circ b_{k,l}\circ (s^{-1})^{\otimes
k+l+1}$. Then the condition $(D^{M})^{2}=0$ is equivalent to the
following system of equations:
\begin{eqnarray*}
b_{0,0}(b_{0,0}(m))&=&0, \\
b_{0,0}(b_{0,1}(m,a_{1}))-b_{0,1}(b_{0,0}(m),a_{1})-(-1)^{|m|}
b_{0,1}(m,m_{1}(a_{1})) &=& 0,\\
b_{0,0}(b_{1,0}(a_{1},m))-b_{1,0}(m_{1}(a_{1}),m)-(-1)^{|a_{1}|}
b_{1,0}(a_{1},b_{0,0}(m)) &=& 0,\\
b_{0,0}(b_{1,1}(a_{1},m,a_{2}))-b_{0,1}(b_{1,0}(a_{1},m),a_{2})+
b_{1,0}(a_{1},b_{0,1}(m,a_{2}))& &\\
+b_{1,1}(m_{1}(a_{1}),m,a_{2})
+(-1)^{|a_{1}|} b_{1,1}(a_{1},b_{0,0}(m),a_{2})& &\\
+(-1)^{|a_{1}|+|m|}b_{1,1}(a_{1},m,m_{1}(a_{2})) &=& 0,\\
\end{eqnarray*}
$$...$$
\begin{eqnarray*}
\sum_{i=1}^{k} \sum_{j=1}^{k-i+1} \pm
 b_{k-i+1,l}(a_{1},...,m_{i}(a_{j},...,a_{i+j-1}),...,m,...,a_{k+l}) & &\\
+\sum_{i=0}^{k} \sum_{j=0}^{l} \pm
 b_{k-i,l-j}(a_{1},...,b_{i,j}(a_{k-i+1},...,m,...,a_{k+j}),...,a_{k+l}) & &\\
+\sum_{i=1}^{l} \sum_{j=1}^{l-i+1} \pm
 b_{k,l-i+1}(a_{1},...,m,...,m_{i}(a_{k+j},...,a_{k+i+j-1}),...,a_{k+l})&=&0
\end{eqnarray*}
$$...$$
where the signs are analogous to the ones in Proposition \ref{2.4}.
\end{prop}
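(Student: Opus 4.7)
The plan is to mimic the proof of Proposition \ref{2.4}, using Lemma \ref{3.5} in place of Lemma \ref{2.3}. The key preliminary observation is that $(D^{M})^{2}$ itself is a map $B^{M}A\to B^{M}A$ satisfying diagram \eqref{eqn-3.2} with $\psi=0$. Indeed, iterating diagram \eqref{eqn-3.1} and expanding the square gives, on the $BA\otimes B^{M}A$ summand,
\begin{equation*}
(id\otimes D^{M}+D\otimes id)^{2} = id\otimes (D^{M})^{2}+(id\otimes D^{M})(D\otimes id)+(D\otimes id)(id\otimes D^{M})+D^{2}\otimes id,
\end{equation*}
and the two cross-terms cancel by the Koszul sign rule since $|D|=|D^{M}|=-1$, while $D^{2}=0$; the $B^{M}A\otimes BA$ summand is analogous. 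Hence by Lemma \ref{3.5}(b) applied with $\psi=0$, the condition $(D^{M})^{2}=0$ is equivalent to $pr_{sM}\circ (D^{M})^{2}=0$ on every summand $sA^{\otimes k}\otimes sM\otimes sA^{\otimes l}$.

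Next I would evaluate this projection on a typical tensor $(sa_{1},\dots,sa_{k},sm,sa_{k+1},\dots,sa_{k+l})$. Using the explicit formula in Lemma \ref{3.5}(a), the outer $D^{M}$ produces precisely three kinds of summands: one in which some $D_{i}$ is applied to a consecutive block of the left $sA$-factors, one in which some $D^{M}_{i,j}$ is applied to a block containing $sm$, and one in which some $D_{i}$ is applied to a consecutive block of the right $sA$-factors. Composing with the inner $D^{M}$ and then projecting to $sM$ kills every term whose outer operation did not already produce an element of $sM$, leaving exactly the three families of compositions that appear on the left-hand side of the asserted equation.

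Finally, I would de-suspend via $D_{i}=s\circ m_{i}\circ (s^{-1})^{\otimes i}$ and $D^{M}_{k,l}=s\circ b_{k,l}\circ (s^{-1})^{\otimes k+l+1}$. The Koszul signs in Lemma \ref{3.5}(a), combined with the signs from commuting $s$ and $s^{-1}$ past the homogeneous entries $a_{r}$ and $m$, produce signs $\varepsilon$ of exactly the shape recorded in Proposition \ref{2.4}, except that insertions may now land either among the $a$'s (giving the first and third sums) or on $m$ (giving the middle sum).

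As in Proposition \ref{2.4}, the main obstacle is purely bookkeeping: verifying that the various Koszul signs from Lemma \ref{3.5}(a), the suspension signs introduced by $s$ and $s^{-1}$, and the degree factors $|\varrho_{i,j}|$ and $|\psi_{i}|$ all combine to give the signs indicated in the statement. No new conceptual input beyond Lemma \ref{3.5} is needed, which is why the proposition may be stated as a direct translation of $(D^{M})^{2}=0$ via the correspondence of Lemma \ref{3.5}(b).
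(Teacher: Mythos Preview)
Your proposal is correct and follows the same route as the paper's one-line proof, which simply says the result follows from Lemma \ref{3.5} after rewriting the suspended maps $D^{M}_{k,l}$ and $D_{j}$ as $b_{k,l}$ and $m_{j}$; you have merely supplied the details the paper omits, in particular the useful observation that $(D^{M})^{2}$ again satisfies diagram \eqref{eqn-3.2} with $\psi=0$, so Lemma \ref{3.5}(b) reduces $(D^{M})^{2}=0$ to the vanishing of its $sM$-components. One small wording issue: in your middle paragraph the labels ``inner'' and ``outer'' are reversed relative to the usual convention---the projection to $sM$ forces the \emph{last}-applied $D^{M}$ to act as a single $D^{M}_{r,s}$ consuming all remaining factors, while the \emph{first}-applied $D^{M}$ is the one that produces the three families of terms---but the computation itself is unaffected.
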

\begin{proof}
The result follows from Lemma \ref{3.5}, after rewriting
$D^{M}_{k,l}$ and $D_{j}$ by $b_{k,l}$ and $m_{j}$.
\end{proof}
\begin{expl}\label{3.8}
With this, Example \ref{2.5} may be extended in the following way.
Let $(A,\partial,\mu)$ be a differential graded algebra with the
A$_\infty$-algebra-structure $m_{1}:=\partial$, $m_{2}:=\mu$ and
$m_{k}:=0$ for $k\geq 3$. Let $(M,\partial',\lambda,\rho)$ be a
differential graded bimodule over $A$, where $\lambda:A\otimes M\to
M$ and $\rho :M\otimes A\to M$ denote the left- and right-action,
respectively. Then, $M$ is an A$_\infty$-bimodule over $A$ by taking
$b_{0,0}:=\partial'$, $b_{1,0}:=\lambda$, $b_{0,1}:=\rho$ and
$b_{k,l}:=0$ for $k+l>1$. The equations of Proposition \ref{3.7} are
the defining conditions for a differential bialgebra over $A$:
\begin{eqnarray*}
(\partial')^{2}(m)&=&0,\\
\partial'(m.a)&=&m.\partial(a)+(-1)^{|m|}\partial'(m).a,\\
\partial'(a.m)&=&\partial(a).m+(-1)^{|a|}a.\partial'(m),\\
(a.m).b&=&a.(m.b),\\
(m.a).b&=&m.(a\cdot b),\\
a.(b.m)&=&(a\cdot b).m.
\end{eqnarray*}
There are no higher equations.
\end{expl}

For later purposes it is convenient to have the following
\begin{lem}\label{3.9}
Given an A$_\infty$-algebra $(A,D)$ and an A$_\infty$-bimodule
$(M,D^{M})$, with system of maps $\{b_{k,l} :A^{\otimes k}\otimes M
\otimes A^{\otimes l} \to M\}_{k\geq 0, l\geq 0}$ from Proposition
\ref{3.7}, then the dual space $M^{*}:=Hom_{R}(M,R)$ has a canonical
A$_\infty$-bimodule-structure given by maps $\{b'_{k,l} :A^{\otimes
k}\otimes M^{*} \otimes A^{\otimes l} \to M^{*}\}_{k\geq 0, l\geq
0}$,
\begin{multline*}
(b'_{k,l}(a_{1},...,a_{k},m^{*},a_{k+1},...,a_{k+l}))(m):=(-1)^
 {\varepsilon}\cdot m^{*}(b_{l,k}(a_{k+1},...,a_{k+l},m,a_{1},...,a_{k})),\\
\text{ where } \varepsilon:= (|a_{1}|+...+|a_{k}|)\cdot
(|m^{*}|+|a_{k+1}|+...+|a_{k+l}|+|m|)+|m^{*}|\cdot(k+l+1).
\end{multline*}
\end{lem}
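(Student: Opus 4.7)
The plan is to verify directly that the proposed maps $\{b'_{k,l}\}$ satisfy the $A_\infty$-bimodule equations of Proposition \ref{3.7}, by evaluating each equation on an arbitrary $m \in M$ and reducing to the corresponding equation for $\{b_{k,l}\}$ applied to a cyclically rotated sequence of inputs. By Lemma \ref{3.5}(b), the system $\{b'_{k,l}\}$ assembles into a degree $-1$ map $D^{M^*} : B^{M^*}A \to B^{M^*}A$ making the compatibility diagram \eqref{eqn-3.1} with $D$ commute automatically, so the only content is the relation $(D^{M^*})^2 = 0$.

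The defining formula can be read as a twisted duality identity
$$\bigl(b'_{k,l}(a_1,\ldots,a_k,m^*,a_{k+1},\ldots,a_{k+l})\bigr)(m) = (-1)^\varepsilon\, m^*\bigl(b_{l,k}(a_{k+1},\ldots,a_{k+l},m,a_1,\ldots,a_k)\bigr),$$
in which the right-hand side is obtained from the left by the cyclic rotation swapping the two $A$-blocks on either side of the central slot, replacing $m^*$ by $m$, and $b'$ by $b$. Applying this identity first to the outermost $b'_{p,q}$ in a nested expression and, when present, to any inner $b'_{i,j}$, I would convert each summand in the $A_\infty$-bimodule relation for $b'$ at $(a_1,\ldots,a_k,m^*,a_{k+1},\ldots,a_{k+l})$ into $m^*$ applied to a corresponding nested expression in the $b_{*,*}$ and $m_*$ evaluated on the rotated tuple $(a_{k+1},\ldots,a_{k+l},m,a_1,\ldots,a_k)$.

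Under this correspondence, the three families of terms in Proposition \ref{3.7} for $b'$ --- (i) inner $m_i$ on a block left of $m^*$, (ii) inner $b'_{i,j}$ straddling $m^*$, (iii) inner $m_i$ on a block right of $m^*$ --- map to families (iii'), (ii'), (i') in the relation for $b$ at the rotated tuple, exactly because rotation interchanges the two sides of the central slot while preserving the nesting structure of a straddling term. The full sum is then $(-1)^\varepsilon m^*$ applied to the $A_\infty$-bimodule relation for $b$ at the rotated input, which vanishes by hypothesis; since $m$ is arbitrary, the relation for $b'$ follows.

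The main obstacle is sign bookkeeping. The Koszul contribution $(|a_1|+\cdots+|a_k|)(|m^*|+|a_{k+1}|+\cdots+|a_{k+l}|+|m|)$ encodes the cost of commuting the entire left $A$-block past $m^*$ and the remaining right elements, while the supplementary term $|m^*|(k+l+1)$ absorbs the discrepancy coming from the suspension shifts built into $D^{M^*}_{k,l} = s \circ b'_{k,l} \circ (s^{-1})^{\otimes k+l+1}$. Both contributions appear twice when a nested term is unwound (once for the outer $b'$ and once for the inner $b'$), and a careful case analysis --- keeping track of the position of the inner operation, of which elements are commuted past which, and of how the two $|m^*|(k+l+1)$-type shifts combine --- is needed to check that the accumulated sign matches the sign prescribed by Proposition \ref{3.7} for $b$ at the rotated input. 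This verification is routine but tedious, and is the only nontrivial step.
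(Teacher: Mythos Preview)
Your proposal is correct and follows essentially the same approach as the paper: both verify $(D^{M^*})^2=0$ via the criterion of Proposition~\ref{3.7}, evaluate each term on an arbitrary $m\in M$, use the defining duality formula to convert the three families of summands for $b'$ into the corresponding families for $b$ at the cyclically rotated input (with the left/right $m_i$-families swapping), and then appeal to the known bimodule relation for $M$. The paper likewise defers the sign verification to a ``thorough check,'' so your treatment of that step is at the same level of detail.
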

\begin{proof}
To see, that $(D^{M^{*}})^{2}=0$, we can use the criterion from
Proposition \ref{3.7}. The top and the bottom term in the general
sum of Proposition \ref{3.7} convert to
\begin{multline*}
(b'_{k-i+1,l}(a_{1},...,m_{i}(a_{j},...,a_{i+j-1})
    ,...,m^{*},...,a_{k+l}))(m)\\
    =\pm m^{*}(b_{l,k-i+1}(a_{k+1},...,a_{k+l},m,
a_{1},...,m_{i}(a_{j},...,a_{i+j-1}),...,a_{k}))\text{, and}
\end{multline*}
\begin{multline*}
(b'_{k,l-i+1}(a_{1},...,m^{*},...,m_{i}(a_{k+j},...,
   a_{k+i+j-1}),...,a_{k+l}))(m)\\
   =\pm m^{*}(b_{l-i+1,k}(a_{k+1},...,m_{i}(a_{k+j},...,
   a_{k+i+j-1}),...,a_{k+l},m,a_{1},...,a_{k})).
\end{multline*}
These terms come from the A$_\infty$-bimodule-structure of $M$.
Similar arguments apply to the middle term:
\begin{multline*}
(b'_{k-i,l-j}(a_{1},...,b'_{i,j}(a_{k-i+1},...,
 m^{*},...,a_{k+j}),...,a_{k+l}))(m)\\
= \pm (b'_{i,j}(a_{k-i+1},...,m^{*},...,a_{k+j}))(b_{l-j,k-i}
      (a_{k+j+1},...,a_{k+l},m,a_{1},...,a_{k-i}))\quad\quad\quad\,\, \\
= \pm m^{*}(b_{j,i}(a_{k+1},...,a_{k+j},b_{l-j,k-i}
      (a_{k+j+1},...,a_{k+l},m,a_{1},...,a_{k-i}),a_{k-i+1},...,a_{k})).
\end{multline*}
The sum from Proposition \ref{3.7} for the A$_\infty$-bimodule
$M^{*}$ contains exactly the terms of $m^{*}$ applied the the sum
for the A$_\infty$-bimodule $M$. A thorough check identifies the
signs.
\end{proof}

\section{Morphisms of A$_\infty$-bimodules}\label{4}

Let $(M,D^{M})$ and $(N,D^{N})$ be two A$_\infty$-bimodules over the
A$_\infty$-algebra $(A,D)$. We next define the notion of
A$_\infty$-bimodule-map between $(M,D^{M})$ and $(N,D^{N})$. Again a
motivation is to have an induced map of their
Hochschild-cochain-complexes.

\begin{prop}\label{4.1}
Let $V$, $W$ and $Z$ be modules, and let $F$ be a map $F:T^{W}V \to
T^{Z}V$. Then the induced map $F^{\sharp} :CoDer(TV,T^{W}V) \to
CoDer(TV,T^{Z}V)$, given by $F^{\sharp}(f):=F\circ f$, is
well-defined, (i.e. it maps coderivations to coderivations,) if and
only if the following diagram commutes:
\begin{equation}\label{eqn-4.1}
\begin{diagram}
\node{T^{W}V}\arrow{s,l}{F}\arrow{e,t}{\Delta^{W}}
  \node{(TV\otimes T^{W}V)\oplus (T^{W}V\otimes TV)}\arrow{s,r}
  {(id\otimes F) \oplus (F\otimes id)} \\
\node{T^{Z}V}\arrow{e,b}{\Delta^{Z}} \node{(TV\otimes
  T^{Z}V)\oplus (T^{Z}V\otimes TV)}
\end{diagram}
\end{equation}
\end{prop}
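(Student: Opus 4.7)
The plan is to mirror the proof of Proposition 3.4 essentially verbatim: that proposition is the special case in which $W = Z$ and $F$ is a map of degree $-1$ from $B^MA$ to itself, while Proposition 4.1 only requires $F$ to go between two potentially different spaces of the form $T^WV \to T^ZV$. The argument is a two-direction diagram chase, feeding the coderivation property of $f$ through the hypothesis on $F$.

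For the ``if'' direction, I assume (4.1) commutes. Given any coderivation $f \colon TV \to T^WV$, a direct computation chains the two commuting squares together:
\begin{eqnarray*}
\Delta^Z \circ (F \circ f) &=& \bigl((id \otimes F) \oplus (F \otimes id)\bigr) \circ \Delta^W \circ f \\
&=& \bigl((id \otimes F) \oplus (F \otimes id)\bigr) \circ (id \otimes f + f \otimes id) \circ \Delta \\
&=& \bigl(id \otimes (F \circ f) + (F \circ f) \otimes id\bigr) \circ \Delta,
\end{eqnarray*}
using (4.1) on the first step and the coderivation property of $f$ on the second. This is precisely the coderivation property for $F^{\sharp}(f) = F \circ f$ viewed as a map $TV \to T^ZV$.

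For the ``only if'' direction, I assume $F^{\sharp}(f)$ is a coderivation for every coderivation $f$. Running the above chain backwards shows that the map
\[ G := \Delta^Z \circ F - \bigl((id \otimes F) \oplus (F \otimes id)\bigr) \circ \Delta^W \colon T^WV \to (TV \otimes T^ZV) \oplus (T^ZV \otimes TV) \]
satisfies $G \circ f = 0$ for every coderivation $f \colon TV \to T^WV$. To conclude $G = 0$, I invoke Lemma \ref{3.3}: coderivations $TV \to T^WV$ are parametrized by arbitrary systems $\{\varrho_i \colon V^{\otimes i} \to W\}_{i \geq 0}$, and the explicit formula in Lemma \ref{3.3}(a) shows that as $\varrho$ ranges over maps $V^{\otimes n} \to W$ and inputs $(v_1,\dots,v_m)$ range over $TV$, the outputs $\tilde\varrho(v_1,\dots,v_m)$ generate $T^WV$. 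Hence $G$ must vanish identically, and diagram (4.1) commutes.

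The main obstacle is the final cancellation step, identical to the one glossed in the proof of Proposition \ref{3.4} with the phrase ``With Lemma \ref{3.3} this condition reduces to.'' The work is in justifying that one really can strip $f$ off both sides: one uses $\varrho \colon V^{\otimes 0} \to W$, $1 \mapsto w$, to produce tensors containing $w$ in every slot, and then refines by choosing $\varrho \colon V^{\otimes n} \to W$ supported on distinguished tensor inputs in order to isolate $G$ at a single element $(v_1,\dots,v_k,w,v_{k+1},\dots,v_{k+l})$. Everything else is a mechanical repetition of the Proposition \ref{3.4} computation with $D^M$ replaced by $F$ and the diagonal domain/codomain split into $T^WV$ and $T^ZV$.
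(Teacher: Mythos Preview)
Your proof is correct and follows essentially the same approach as the paper's own proof: stack the coderivation square for $f$ on top of diagram (4.1), observe that commutativity of the top and the composite forces commutativity of the bottom on $\mathrm{Im}(f)$, and then invoke Lemma \ref{3.3} to get enough coderivations so that these images exhaust $T^WV$. The paper's proof is terser---it draws the stacked diagram and dispatches the surjectivity with the single phrase ``By Lemma \ref{3.3} there are enough coderivations''---whereas you spell out both directions explicitly and sketch the $\varrho_0\colon R\to W$ trick for the ``only if'' part, but the logical content is the same.
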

\begin{proof}
If both $f:TV\to T^{W}V$ and $F\circ f:TV\to T^{Z}V$ are
coderivations, then the top diagram and the overall diagram below
commute.
\[
\begin{diagram}
\node{TV}\arrow{s,l}{f}\arrow{e,t}{\Delta}
  \node{TV\otimes TV}\arrow{s,r} {(id\otimes f) + (f\otimes id)} \\
\node{T^{W}V}\arrow{s,l}{F}\arrow{e,t}{\Delta^{W}}
  \node{(TV\otimes T^{W}V)\oplus (T^{W}V\otimes TV)}\arrow{s,r}
  {(id\otimes F) \oplus (F\otimes id)} \\
\node{T^{Z}V}\arrow{e,b}{\Delta^{Z}} \node{(TV\otimes
  T^{Z}V)\oplus (T^{Z}V\otimes TV)}
\end{diagram}
\]
Therefore, the lower diagram has to commute if applied to any
element in $Im(f)\subset T^{W}V$. By Lemma \ref{3.3} there are
enough coderivations to imply the claim.
\end{proof}
Again let us describe $F$ by a system of maps.
\begin{lem}\label{4.2}
\begin{itemize}
\item [(a)]
Let $V$, $W$ and $Z$ be modules, and let  $\varrho:V^{\otimes
k}\otimes W\otimes V^{\otimes l}\to Z$ be a map, which may be viewed
as a map $\varrho: T^{W}V \to Z$ whose only nonzero component is the
original $\varrho$ on $V^{\otimes k}\otimes W\otimes V^{\otimes l}$.
Then $\varrho$ lifts uniquely to a map $\tilde{\varrho} :T^{W}V \to
T^{Z}V$
\[
\begin{diagram}
\node{} \node{T^{Z}V}\arrow{s,r}{projection} \\
\node{T^{W}V} \arrow{e,b}{\varrho} \arrow{ne,t}{\tilde{\varrho}}
\node{Z}
\end{diagram}
\]
which makes the diagram \eqref{eqn-4.1} in Proposition \ref{4.1}
commute. $\tilde{\varrho}$ is given by
$$ \tilde{\varrho}(v_{1},...,v_{r},w,v_{r+1},...,v_{r+s}):=0,
   \quad \text{ for } r<k \text{ or } s<l, $$
\begin{multline*}
\quad\quad\quad \tilde{\varrho}(v_{1},...,v_{r},w,v_{r+1},...,v_{r+s})\\
   := (-1)^{|\varrho|\sum_{i=1}^{r-k}|v_{i}|}
   (v_{1},...,\varrho(v_{r-k+1},...,w,...,v_{r+l}),...,v_{r+s}),\\
 \text{for } r\geq k \text{ and } s\geq l.
\end{multline*}
Thus $\tilde{\varrho}\mid_{V^{\otimes r}\otimes W\otimes V^{\otimes
s}}:V^{\otimes r}\otimes W\otimes V^{\otimes s} \to V^{\otimes r-k}
\otimes Z\otimes V^{\otimes s-l}$.
\item [(b)]
There is a one-to-one correspondence between maps $\sigma:T^{W}V\to
T^{Z}V$ making diagram \eqref{eqn-4.1} commute and systems of maps
$\{\varrho_{k,l}:V^{\otimes k}\otimes W\otimes V^{\otimes l}\to
Z\}_{k\geq 0, l\geq 0}$, given by $\sigma=\sum_{k\geq 0, l\geq 0}
\widetilde{\varrho_{k,l}}$.
\end{itemize}
\end{lem}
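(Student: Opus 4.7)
The proof should closely parallel the proofs of Lemmas \ref{2.3}, \ref{3.3}, and \ref{3.5}, adapted to the current situation where source and target are of the form $T^{W}V$ and $T^{Z}V$ respectively. The plan is to decompose $\tilde\varrho$ according to the grading on $T^{Z}V$ and use the commutativity of diagram \eqref{eqn-4.1} to determine each piece inductively, then to assemble the bijection in (b) from the resulting lifting.

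For part (a), I would denote by $\tilde{\varrho}^{m}$ the component of $\tilde{\varrho}$ mapping $T^{W}V \to \bigoplus_{r+s=m} V^{\otimes r}\otimes Z\otimes V^{\otimes s}$ (for $m\geq 0$) and by $\tilde{\varrho}^{-1}$ the component $T^{W}V \to R$. Writing out the commutativity of \eqref{eqn-4.1} applied to an arbitrary element $(v_{1},\dots,v_{r},w,v_{r+1},\dots,v_{r+s})$ and using the explicit formulas for $\Delta^{W}$ and $\Delta^{Z}$ from Definition \ref{3.1}, one obtains
\begin{multline*}
\Delta^{Z}(\tilde\varrho(v_{1},\dots,w,\dots,v_{r+s}))= \sum_{i=0}^{r}(v_{1},\dots,v_{i})\otimes \tilde\varrho(v_{i+1},\dots,w,\dots,v_{r+s})\\
+ \sum_{i=r}^{r+s}\tilde\varrho(v_{1},\dots,w,\dots,v_{i})\otimes(v_{i+1},\dots,v_{r+s}).
\end{multline*}
Projecting this identity onto $R\otimes TV$ forces $\tilde{\varrho}^{-1}=0$, and projecting onto the summand where $\tilde\varrho$ takes $m-1$ factors on one side and a single factor sits on the other (i.e. taking $i=m-1$, $j=1$ in the notation of Lemma \ref{3.5}) uniquely determines $\tilde{\varrho}^{m}$ in terms of $\tilde{\varrho}^{m-1}$. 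Since the only given data is $\varrho$ itself, induction shows that $\tilde\varrho^{m}$ vanishes unless $m=1$, where it must equal $\varrho$ on $V^{\otimes k}\otimes W\otimes V^{\otimes l}$ and zero elsewhere; this forces the explicit formula written in the statement. Conversely, one checks directly that the formula does satisfy \eqref{eqn-4.1}, so uniqueness upgrades to existence and uniqueness.

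Part (b) is then a formal consequence. The maps
\begin{eqnarray*}
\alpha:\{\{\varrho_{k,l}\}_{k,l\geq 0}\} &\to& \{\sigma:T^{W}V\to T^{Z}V \mid \sigma \text{ satisfies }\eqref{eqn-4.1}\}, \quad \{\varrho_{k,l}\}\mapsto \sum_{k,l\geq 0}\widetilde{\varrho_{k,l}},\\
\beta:\{\sigma:T^{W}V\to T^{Z}V \mid \sigma \text{ satisfies }\eqref{eqn-4.1}\} &\to& \{\{\varrho_{k,l}\}_{k,l\geq 0}\},\quad \sigma\mapsto\{pr_{Z}\circ\sigma|_{V^{\otimes k}\otimes W\otimes V^{\otimes l}}\}
\end{eqnarray*}
are well defined (linearity of \eqref{eqn-4.1} in $F$ ensures the sum in $\alpha$ still satisfies the diagram), and the explicit lifting formula in (a) gives $\beta\circ\alpha=id$, while the uniqueness clause in (a) gives $\alpha\circ\beta=id$.

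The main obstacle will be the bookkeeping in the inductive step: one has to carefully pick out the correct summand from $\Delta^{Z}\circ\tilde\varrho = (id\otimes\tilde\varrho \oplus \tilde\varrho\otimes id)\circ\Delta^{W}$ in order to isolate $\tilde\varrho^{m}$ on the left-hand side while controlling the right-hand side by components of strictly lower order, and to verify that the sign $(-1)^{|\varrho|\sum_{i=1}^{r-k}|v_{i}|}$ in the stated formula is the unique one consistent with the graded tensor-product conventions. As in the earlier lemmas, this is essentially a careful but routine sign check, and no new conceptual ingredient beyond the strategy of the proof of Lemma \ref{3.5} is required.
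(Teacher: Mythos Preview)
Your overall strategy matches the paper's exactly: decompose $\tilde\varrho$ into components $\tilde\varrho^{m}$ according to the target grading, use the commutativity of \eqref{eqn-4.1} together with $\Delta^{W}$ and $\Delta^{Z}$ to pin down each component inductively, and then read off the bijection in (b) via the mutually inverse maps $\alpha$ and $\beta$. Part (b) is fine.

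There is, however, a genuine slip in the conclusion of your induction in (a). You write that ``induction shows that $\tilde\varrho^{m}$ vanishes unless $m=1$, where it must equal $\varrho$.'' Both halves of this are off. First, the given datum $\varrho$ is the component $\tilde\varrho^{0}$ (the piece landing in $Z=V^{\otimes 0}\otimes Z\otimes V^{\otimes 0}$), not $\tilde\varrho^{1}$. Second, and more importantly, the higher components $\tilde\varrho^{m}$ do \emph{not} vanish: the inductive step says that $\tilde\varrho^{m}$ is \emph{determined} by $\tilde\varrho^{m-1}$, not that it is zero whenever $\tilde\varrho^{m-1}$ is known. Concretely, restricted to $V^{\otimes r}\otimes W\otimes V^{\otimes s}$ the lift lands in $V^{\otimes r-k}\otimes Z\otimes V^{\otimes s-l}$, so $\tilde\varrho^{m}$ is nonzero precisely on those inputs with $(r-k)+(s-l)=m$, where it is given by the displayed formula in the statement. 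The paper phrases this correctly: the induction shows that $\tilde\varrho^{m}$ is supported on inputs with $r-k+s-l=m$ and equals the stated expression there. Your sentence as written contradicts the very formula you then claim to have forced.

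A minor point: in your displayed expansion of $(id\otimes\tilde\varrho\oplus\tilde\varrho\otimes id)\circ\Delta^{W}$, the first sum should carry the Koszul sign $(-1)^{|\tilde\varrho|\sum_{t=1}^{i}|v_{t}|}$, as in the paper, since $\tilde\varrho$ passes over $v_{1},\dots,v_{i}$.
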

\begin{proof}
\begin{itemize}
\item [(a)]
Denote by $\tilde{\varrho}^{j}$ the component of $\tilde{\varrho}$
mapping $T^{W}V\to \bigoplus _{r+s=j} V^{\otimes r}\otimes Z\otimes
V^{\otimes s}$, and $\tilde{\varrho}^{-1}$ the component $T^W V\to R$. Then, $\tilde{\varrho}^{0},...,\tilde{\varrho}^
{m-1}$ uniquely determine the component $\tilde{\varrho}^{m}$.
$$ \Delta^{Z}(\tilde{\varrho}(v_{1},...,v_{r},w,v_{r+1},...,
   v_{r+s}))\quad\quad\quad\quad\quad\quad\quad\quad\quad\quad\quad\quad\quad $$
\begin{eqnarray*}
&=&(id\otimes \tilde{\varrho}+\tilde{\varrho} \otimes id)
   (\Delta^{W} (v_{1},...,v_{r},w,v_{r+1},...,v_{r+s}))\\
&=& \sum_{i=0}^{r}(-1)^{|\tilde{\varrho}|\sum_{t=1}^{i}|v_{t}|}
    (v_{1},...,v_{i}) \otimes \tilde{\varrho}(v_{i+1},...,w,...,v_{r+s})\\
& & +\sum_{i=r}^{r+s}\tilde{\varrho}(v_{1},...,w,...,v_{i})
    \otimes (v_{i+1},...,v_{r+s}).\\
\end{eqnarray*}
Projecting both sides to $R\otimes TV$ shows that $\tilde{\varrho}^{-1}=0$, and projecting for fixed $i+j=m$, to the component
\begin{multline*}
\quad\quad\quad V^{\otimes j}\otimes \bigoplus_{r+s=i} (V^{\otimes r}\otimes Z\otimes V^{\otimes s})+\bigoplus_{r+s=i} (V^{\otimes r}\otimes Z\otimes V^{\otimes s})\otimes V^{\otimes j}\\
\subset T^{Z}V\otimes TV+TV\otimes T^{Z}V,
\end{multline*}
yields for $ \Delta^{Z}(\tilde{\varrho}^{m}(v_{1},...,w,...,v_{r+s}))|_{V^{\otimes j}\otimes \bigoplus_{r+s=i} (V^{\otimes r}\otimes Z\otimes V^{\otimes s})+\bigoplus_{r+s=i} (V^{\otimes r}\otimes Z\otimes V^{\otimes s})\otimes V^{\otimes j}}$ the expression
\begin{equation*}
\pm (v_{1},...,v_{j}) \otimes
    \tilde{\varrho}^{i}(v_{j+1},...,w,...,v_{r+s})
+\tilde{\varrho}^{i}
    (v_{1},...,w,...,v_{r+s-j}) \otimes (v_{r+s-j+1},...,v_{r+s}).
\end{equation*}
For $m\geq 1$, choosing $i=m-1,\,\,\, j=1$ uniquely determines $\tilde{\varrho}^{m}$ by lower components. Therefore, an induction shows that
$\tilde{\varrho}^{m}$ is only nonzero on $V^{\otimes r}\otimes
W\otimes V^{\otimes s}$ with $r-k+s-l=m$, where
$\tilde{\varrho}^{m}(v_{1},...,v_{r},w,v_{r+1},...,v_{r+s})$ is
given by $ (-1)^{|\varrho|\sum_{i=1}^{r-k}|v_{i}|}
   (v_{1},...,\varrho(v_{r-k+1},...,w,...,v_{r+l}),...,v_{r+s})$.
\item [(b)]
Let $X:=\{\sigma:T^{W}V\to T^{Z}V\,\,|\,\, \sigma$ makes diagram
\eqref{eqn-4.1} commute$\}$. Then
\begin{eqnarray*}
& & \alpha:\{\{\varrho_{k,l}:V^{\otimes k}\otimes W\otimes
V^{\otimes l}\to Z\}_{k\geq 0, l\geq 0}\}
\to X, \\
& & \,\,\,\,\,\,\,\,\,\,\,\,\,\,\,\,\,\,\,
\{\varrho_{k,l}:V^{\otimes k}\otimes W\otimes V^{\otimes l}\to
Z\}_{k\geq 0, l\geq 0} \mapsto
\sum_{k\geq 0, l\geq 0} \widetilde{\varrho_{k,l}},\\
& & \beta:X \to \{\{\varrho_{k,l}:V^{\otimes k}\otimes
W\otimes V^{\otimes l}\to Z\}_{k\geq 0, l\geq 0}\},\\
& & \,\,\,\,\,\,\,\,\,\,\,\,\,\,\,\,\,\,\,
\sigma\mapsto\{pr_{Z}\circ\sigma|_{V^{\otimes k}\otimes W\otimes
V^{\otimes l}}\}_{k\geq 0, l\geq 0}
\end{eqnarray*}
are inverse to each other by (a).
\end{itemize}
\end{proof}
We may apply this to the Hochschild-complex.
\begin{defn}\label{4.3}
Let $(M,D^{M})$ and $(N,D^{N})$ be two A$_\infty$-bimodules over the
A$_\infty$-algebra $(A,D)$. Then a map $F: B^{M}A\to B^{N}A$ of
degree 0 is called an \textbf{A$_\infty$-bimodule-map}, if $F$ makes
the diagram
\[
\begin{diagram}
\node{B^{M}A}\arrow{s,l}{F}\arrow{e,t}{\Delta^{M}}
  \node{(BA\otimes B^{M}A)\oplus (B^{M}A\otimes BA)}\arrow{s,r}
  {(id\otimes F) \oplus (F\otimes id)} \\
\node{B^{N}A}\arrow{e,b}{\Delta^{N}} \node{(BA\otimes
  B^{N}A)\oplus (B^{N}A\otimes BA)}
\end{diagram}
\]
commute, and in addition $F\circ D^{M}=D^{N}\circ F$.

By Proposition \ref{4.1}, every A$_\infty$-bimodule-map induces a
map $F^{\sharp}:f\mapsto F\circ f$ between the Hochschild-complexes,
which preserves the differentials, since $(F^{\sharp}\circ
\delta^{M})(f)=F^{\sharp}(D^{M}\circ f+(-1)^{|f|} f\circ D)= F\circ
D^{M}\circ f+(-1)^{|f|}F\circ f\circ D= D^{N} \circ F\circ
f+(-1)^{|f|} F\circ f\circ D=\delta^{N}(F\circ f)=(\delta^{N}\circ
F^{\sharp})(f)$.
\end{defn}

\begin{prop}\label{4.4}
Let $(A,D)$ be an A$_\infty$-algebra with system of maps
$\{m_{i}:A^{\otimes i} \to A\}_{i\geq 1}$ from Proposition \ref{2.4}
associated to $D$, where $m_{0}=0$. Let $(M,D^{M})$ and $(N,D^{N})$
be A$_\infty$-bimodules over $A$ with systems of maps $\{b_{k,l}
:A^{\otimes k}\otimes M \otimes A^{\otimes l} \to M\}_{k\geq 0,
l\geq 0}$ and $\{c_{k,l} :A^{\otimes k}\otimes N \otimes A^{\otimes
l} \to N\}_{k\geq 0, l\geq 0}$ from Proposition \ref{3.7} associated
to $D^{M}$ and $D^{N}$ respectively. Let $F:T^{M}A \to T^{N}A$ be an
A$_\infty$-bimodule-map between $M$ and $N$, and let $\{F_{k,l}
:sA^{\otimes k}\otimes sM \otimes sA^{\otimes l} \to sN\}_{k\geq 0,
l\geq 0}$ be a system of maps associated to $F$ by Lemma \ref{4.2}
(b). Rewrite the maps $F_{k,l}$ by $f_{k,l} :A^{\otimes k}\otimes M
\otimes A^{\otimes l} \to N$ by using the suspension map: $F_{k,l}=
s\circ f_{k,l}\circ (s^{-1})^{\otimes k+l+1}$. Then the condition
$F\circ D^{M}=D^{N}\circ F$ is equivalent to the following system of
equations:
$$ f_{0,0}(b_{0,0}(m))=c_{0,0} (f_{0,0}(m)), $$
\begin{multline*}
f_{0,0}(b_{0,1}(m,a))- f_{0,1}(b_{0,0}(m),a)- (-1)^{|m|}
f_{0,1}(m,m_{1}(a))\\=c_{0,0}(f_{0,1}(m,a))+ c_{0,1}(f_{0,0}(m),a),
\end{multline*}
\begin{multline*}
f_{0,0}(b_{1,0}(a,m))- f_{1,0}(m_{1}(a),m)- (-1)^{|a|}
f_{1,0}(a,b_{0,0}(m))\\=c_{0,0}(f_{1,0}(a,m))+
c_{1,0}(a,f_{0,0}(m)),
\end{multline*}
$$...$$
\begin{multline*}
\sum_{i=1}^{k} \sum_{j=1}^{k-i+1} (-1)^{\varepsilon}
 f_{k-i+1,l}(a_{1},...,m_{i}(a_{j},...,a_{i+j-1}),...,m,...,a_{k+l+1})  \\
+\sum_{j=1}^{k} \sum_{i=k-j+2}^{k+l-j+2} (-1)^{\varepsilon}
 f_{j,k+l-i-j+3} (a_{1},...,b_{k-j+1,i+j-k-2}(a_{j},...,m,...,a_{i+j-1}),...,a_{k+l+1})  \\
+\sum_{i=1}^{l} \sum_{j=k+2}^{k+l-i+2} (-1)^{\varepsilon}
 f_{k,l-i+1}(a_{1},...,m,...,m_{i}(a_{j},...,a_{i+j-1}),...,a_{k+l+1}) \\
=\sum_{j=1}^{k+1} \sum_{i=k-j+2}^{k+l-j+2} (-1)^{\varepsilon'}
 c_{j,k+l-i-j+3} (a_{1},...,f_{k-j+1,i+j-k-2}(a_{j},...,m,...,a_{i+j-1}),...,a_{k+l+1})
\end{multline*}
In order to simplify notation, it is assume that in $(a_{1},...
,a_{k+l+1})$ above, only the first $k$ and the last $l$ elements are
elements of $A$ and $a_{k+1}=m\in M$. Then the signs are given by
\begin{eqnarray*}
\varepsilon &=& i\cdot \sum_{r=1}^{j-1}|a_{r}|+ (j-1) \cdot(i+1)+
(k+l+1)-i, \\ \text{and } \quad \varepsilon' &=&(i+1)\cdot
(j+1+\sum_{r=1}^{j-1}|a_{r}|).
\end{eqnarray*}
$$...$$
\end{prop}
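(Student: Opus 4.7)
My plan is to follow the strategy of Propositions~\ref{2.4} and \ref{3.7}: translate the global identity $F\circ D^{M}=D^{N}\circ F$ into equations among the component maps $f_{k,l}$, $b_{k,l}$, $c_{k,l}$ and $m_{i}$ by expanding both sides via Lemmas~\ref{4.2}(a) and \ref{3.5}(a), and then comparing projections to $sN$.

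A preliminary step shows that it suffices to check equality after projection to $sN$. Both $F\circ D^{M}$ and $D^{N}\circ F$ are maps $B^{M}A\to B^{N}A$ of degree $-1$. Using diagram~\eqref{eqn-4.1} for $F$ and diagram~\eqref{eqn-3.1} for $D^{M}$ and $D^{N}$, a direct computation yields
\[
\Delta^{N}\circ(F\circ D^{M}) = \bigl((\mathrm{id}\otimes(F\circ D^{M})+D\otimes F)\oplus((F\circ D^{M})\otimes\mathrm{id}+F\otimes D)\bigr)\circ\Delta^{M},
\]
and the analogous computation for $\Delta^{N}\circ(D^{N}\circ F)$ produces the same cross terms $D\otimes F$ and $F\otimes D$. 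Hence their difference $F\circ D^{M}-D^{N}\circ F$ itself makes diagram~\eqref{eqn-4.1} commute, and by the uniqueness part of Lemma~\ref{4.2}(b) this difference is zero if and only if its projection to $sN$ vanishes on every summand $sA^{\otimes k}\otimes sM\otimes sA^{\otimes l}$.

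Next I would compute these projections on a generic element $(sa_{1},\dots,sa_{k},sm,sa_{k+1},\dots,sa_{k+l})\in B^{M}A$. By Lemma~\ref{3.5}(a), $D^{M}$ applied to this element splits into three families of summands: some $D_{i}$ applied to a block of $sa$'s left of $sm$, some $D^{M}_{i,j}$ applied to a block straddling $sm$, and some $D_{i}$ applied to a block right of $sm$. Composing with $F$ and projecting to $sN$ forces exactly one $sN$-entry to survive, and by Lemma~\ref{4.2}(a) this projection picks out precisely the $F_{k',l'}$-component wrapping the $sM$-entry produced by $D^{M}$; this reproduces the three sums on the left-hand side of the stated equation. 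Conversely, $D^{N}\circ F$ first replaces a block $(sa_{j},\dots,sm,\dots,sa_{i+j-1})$ around $sm$ by an $sN$-valued output via $F_{k-j+1,i+j-k-2}$, and then $D^{N}$ acts; its projection to $sN$ picks up only the $D^{N}_{j,k+l-i-j+3}$-component that wraps the newly placed $sN$-entry, giving the right-hand side. Substituting $D_{i}=s\circ m_{i}\circ(s^{-1})^{\otimes i}$, $D^{M}_{k,l}=s\circ b_{k,l}\circ(s^{-1})^{\otimes k+l+1}$, $D^{N}_{k,l}=s\circ c_{k,l}\circ(s^{-1})^{\otimes k+l+1}$, and $F_{k,l}=s\circ f_{k,l}\circ(s^{-1})^{\otimes k+l+1}$ converts the identity to its unsuspended form.

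The main obstacle, as in the proofs of Propositions~\ref{2.4} and \ref{3.7}, is the careful bookkeeping of Koszul signs, in particular the asymmetry between $\varepsilon$ on the left and $\varepsilon'$ on the right. Each pass across a suspension or inserted operator contributes a sign depending on the total degree of the entries to its left; on the right-hand side the insertion $c_{j,\cdot}$ wraps an already desuspended output of $f_{\cdot,\cdot}$, producing the additional factor $(i+1)$ in $\varepsilon'$, whereas on the left only a single $m_{i}$ or $b_{i,j}$ is inserted into the original suspended string, yielding $\varepsilon$ of the same shape as in Proposition~\ref{2.4}. A systematic tabulation of these contributions, exactly parallel to the sign check in Proposition~\ref{2.4}, produces the stated formulas.
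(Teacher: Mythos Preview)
Your proposal is correct and follows essentially the same approach as the paper, which simply asserts that the formula follows immediately from the explicit lifting properties in Lemma~\ref{3.5}(a) and Lemma~\ref{4.2}(a). Your preliminary step---showing that $F\circ D^{M}-D^{N}\circ F$ again satisfies diagram~\eqref{eqn-4.1}, so that by Lemma~\ref{4.2}(b) it suffices to compare projections to $sN$---is a welcome clarification that the paper leaves implicit, but the core argument and the handling of signs are the same.
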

\begin{proof}
The formula follows immediately from the explicit lifting properties
in Lemma \ref{3.5} (a) and Lemma \ref{4.2} (a).
\end{proof}
\begin{expl}\label{4.5}
Examples \ref{2.5} and \ref{3.8} can be extended in the following
way. Let $(A,\partial, \mu)$ be a differential graded algebra with
the A$_\infty$-algebra-structure $m_{1}:=\partial$, $m_{2}:=\mu$ and
$m_{k}:=0$ for $k\geq 3$. Let $(M,\partial^{M},\lambda^{M},
\rho^{M})$ and $(N,\partial^{N},\lambda^{N},\rho^{N})$ be
differential graded bimodules over $A$, with the A$_\infty
$-bialgebra-structures given by $b_{0,0}:=\partial^{M}$,
$b_{1,0}:=\lambda^{M}$, $b_{0,1}:=\rho^{M}$ and $b_{k,l}:=0$ for
$k+l>1$, and $c_{0,0}:=\partial^{N}$, $c_{1,0}:=\lambda^{N}$,
$c_{0,1}:=\rho^{N}$ and $c_{k,l}:=0$ for $k+l>1$. Finally, let
$f:M\to N$ be a bialgebra map of degree 0. Then $f$ becomes a map of
A$_\infty $-bialgebras by taking $f_{0,0}:=f$ and $ f_{k,l}:=0$ for
$k+l>0$. The equations from Proposition \ref{4.4} are the defining
conditions of a differential bialgebra map from $M$ to $N$:
\begin{eqnarray*}
f\circ\partial^{M} (m) &=& \partial^{N}\circ f(m)\\
f(m.a) &=& f(m).a\\
f(a.m) &=& a.f(m)
\end{eqnarray*}
There are no higher equations.
\end{expl}

\section{$\infty$-inner-products on A$_\infty$-algebras}\label{5}

There are canonical A$_\infty$-bialgebra-structures on a given
A$_\infty$-algebra $A$ and on its dual space $A^*$. We will define
$\infty$-inner products as A$_\infty$-bialgebra-maps from $A$ to
$A^*$.
\begin{lem}\label{5.1}
Let $(A,D)$ be an A$_\infty$-algebra., and let $D$ be given by the
system of maps $\{m_{i}: A^{\otimes i} \to A\}_{i\geq 1}$ from
Proposition \ref{2.4}.
\begin{itemize}
\item [(a)]
There is a canonical A$_\infty$-bimodule-structure on $A$ given by
$b_{k,l}:A^{\otimes k}\otimes A \otimes A^{\otimes l}\to A,
b_{k,l}:=m_{k+l+1}$.
\item [(b)]
There is a canonical A$_\infty$-bimodule-structure on $A^{*}$ given
by $b_{k,l}:A^{\otimes k}\otimes A^{*} \otimes A^{\otimes l}\to
A^{*}$,
\begin{multline*}
\quad\quad\quad (b_{k,l}(a_{1},...,a_{k},a^{*},a_{k+1},...,a_{k+l}))(a) \\
:=\pm a^{*}(m_{k+l+1}(a_{k+1},...,a_{k+l},a,a_{1},...,a_{k})),
\end{multline*}
with the signs from Lemma \ref{3.9}.
\end{itemize}
\end{lem}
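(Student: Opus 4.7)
The plan is to verify both parts by reducing the required A$_\infty$-bimodule relations to equations we have already established. For part (a), I would check the system of equations from Proposition \ref{3.7} directly, substituting $b_{k,l}:=m_{k+l+1}$. The three double sums that appear in Proposition \ref{3.7}, namely the ones involving $m_i$ on the left of $m$, the ones with $b_{i,j}$ containing $m$, and the ones with $m_i$ on the right of $m$, all collapse into a single expression once one replaces each $b_{i,j}$ by $m_{i+j+1}$: the resulting expression is precisely the A$_\infty$-algebra relation of Proposition \ref{2.4} evaluated on the $(k+l+1)$-tuple $(a_1,\dots,a_k,m,a_{k+1},\dots,a_{k+l})$ (where $m\in A$ is simply regarded as an element of $A$). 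Since $(A,D)$ is an A$_\infty$-algebra, this relation vanishes, which gives $(D^A)^2=0$. So part (a) reduces, after carefully tracking indices, to the identity $D^2=0$ on $BA$.

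A cleaner, more conceptual way to see part (a) is to observe directly on the coalgebra level that $B^A A=T(sA)\otimes T(sA)\,/\,\sim$ carries a canonical map built from $D$: define $D^A:B^A A\to B^A A$ by letting $D$ act on the whole string $(sa_1,\dots,sa_k,sa_{k+1},\dots,sa_{k+l+1})$ regarded as an element of $BA$, and then redistributing the result according to where the distinguished factor $sm$ lands. By Lemma \ref{3.5}, the compatibility of $D^A$ with $\Delta^A$ is equivalent to a condition on the component $\varrho=b_{0,0}+b_{1,0}+b_{0,1}+\cdots$, which is automatically satisfied since $\varrho$ is just the restriction of $D$. Then $(D^A)^2=0$ follows from $D^2=0$ by the same bookkeeping.

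For part (b), I would simply invoke Lemma \ref{3.9} applied to the A$_\infty$-bimodule $M=A$ from part (a). The formula in Lemma \ref{3.9} reads
\begin{multline*}
(b'_{k,l}(a_1,\dots,a_k,a^*,a_{k+1},\dots,a_{k+l}))(a)\\
=(-1)^\varepsilon\, a^*\bigl(b_{l,k}(a_{k+1},\dots,a_{k+l},a,a_1,\dots,a_k)\bigr),
\end{multline*}
and since $b_{l,k}=m_{l+k+1}=m_{k+l+1}$ by part (a), this is exactly the formula stated in (b). Lemma \ref{3.9} guarantees that the resulting $D^{A^*}$ satisfies $(D^{A^*})^2=0$ and makes diagram \eqref{eqn-3.1} commute, so no further verification is required.

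The only real obstacle is sign bookkeeping in part (a): one must check that the sign $\varepsilon=i\sum_{r=1}^{j-1}|a_r|+(j-1)(i+1)+k-i$ appearing in Proposition \ref{2.4}, specialized to the $(k+l+1)$-tuple $(a_1,\dots,a_k,m,a_{k+1},\dots,a_{k+l})$, matches the signs that Proposition \ref{3.7} attaches to each of the three summations after the identification $b_{i,j}=m_{i+j+1}$. This is a routine but careful check; everything else is formal.
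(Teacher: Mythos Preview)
Your proposal is correct and matches the paper's own argument: for (a) the paper simply observes that, with $b_{k,l}=m_{k+l+1}$, the lifting of Lemma \ref{3.5}(a) coincides with that of Lemma \ref{2.3}(a), so the equations of Proposition \ref{3.7} reduce to those of Proposition \ref{2.4} and diagram \eqref{eqn-3.1} reduces to the coderivation property of $D$; for (b) the paper invokes Lemma \ref{3.9} applied to $M=A$, exactly as you do. Your ``cleaner'' alternative for (a) is essentially the paper's coalgebra-level remark, though the notation $B^A A=T(sA)\otimes T(sA)/{\sim}$ is not the paper's definition and should be replaced by the observation that $B^A A$ and $BA$ agree as $sA$-strings once the distinguished slot is forgotten.
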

\begin{proof}
\begin{itemize}
\item [(a)]
The A$_\infty$-bialgebra extension described in Lemma \ref{3.5} (a)
becomes the extension by coderivation described in Lemma \ref{2.3}
(a). Equations of Proposition \ref{3.7} become the equations of
Proposition \ref{2.4} and the diagram \eqref{eqn-3.1} from
Proposition \ref{3.4} becomes the usual coderivation diagram for
$D$.
\item [(b)] This follows from (a) and Lemma \ref{3.9}.
\end{itemize}
\end{proof}
\begin{expl}\label{5.2}
For a differential algebra $(A,\partial, \mu)$, the above
A$_\infty$-bialgebra structure on $A$ is exactly the bialgebra
structure given by left- and right-multiplication, since
$b_{1,0}(a\otimes b)=m_{2}(a\otimes b)=a\cdot b$ and
$b_{0,1}(a\otimes b)=m_{2}(a\otimes b)=a\cdot b$, for $a,b\in A$.

Similarly the A$_\infty$-bialgebra structure on $A^{*}$ is given by
right- and left-multiplication in the arguments: $b_{1,0}(a\otimes
b^{*})(c)=b^{*}(m_{2}(c\otimes a))=b^{*}(c\cdot a)$ and
$b_{0,1}(a^{*}\otimes b)(c)=a^{*}(m_{2}(b\otimes c))=a^{*}(b\cdot
c)$, for $a,b,c\in A$, and $a^{*},b^{*}\in A^{*}$.
\end{expl}
\begin{defn}\label{5.3}
Let $(A,D)$ be an A$_\infty$-algebra. Then, we call any
A$_\infty$-bimodule-map $F$ from $A$ to $A^{*}$ an
\textbf{$\infty$-inner-product} on $A$.
\end{defn}

\begin{prop}\label{5.4}
Let $(A,D)$ be an A$_\infty$-algebra. Then, specifying an
$\infty$-inner product on $A$ is equivalent to specifying a system
of inner-products on $A$, $\{<.,.,...>_{k,l}:A^{\otimes k+l+2} \to
R\}_{k\geq 0, l\geq 0}$ which satisfy the following relations:
$$ \sum_{i=1}^{k+l+2}(-1)^{\sum_{j=1}^{i-1}|a_{j}|}
 <a_{1},...,\partial(a_{i}),...,a_{k+l+2}>_{k,l}=\sum_{i,j,n}
 \pm <a_{i},...,m_{j}(a_{n},...),...>_{r,s}, $$
where in the sum on the right side, there is exactly one
multiplication $m_{j}$ ($j\geq 2$) inside the inner-product
$<...>_{r,s}$ and this sum is taken over all i, j, n subject to the
following conditions:
\begin{itemize}
\item [(i)]
The cyclic order of the $(a_{1},...,a_{k+l+2})$ is preserved.
\item [(ii)]
$a_{k+l+2}$ is always in the last slot of $<...>_{r,s}$.
\item [(iii)]
$a_{k+l+2}$ could be inside some $m_{j}$. By (ii), this is only the
case, when the first argument in the inner product is $a_{i}\neq
a_{1}$, as for example in the expression $<a_{i},...,m_{j}
(a_{n},..., a_{k+l+2},a_{1},...,a_{i-1})>_{r,s}$ for $i> 1$.
\item [(iv)]
The special arguments $a_{k+1}$ and $a_{k+l+2}$ are never multiplied
by $m_{j}$ at the same time.
\item [(v)]
The numbers $r$ and $s$ are uniquely determined by the position of
the element $a_{k+1}$ in the inner-product $<...>_{r,s}$. More
precisely, $a_{k+1}$ is in $(r+1)$-th spot of $<...>_{r,s}$, and $s$
is determined by the inner product $<...>_{r,s}$ having $r+s+2$
arguments.
\end{itemize}
\end{prop}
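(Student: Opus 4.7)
The plan is to unwind the definition of an $A_\infty$-bimodule-map $F\colon B^AA\to B^{A^*}A$ using Lemmas \ref{4.2} and \ref{3.9}, dualize so that the $A^*$-output is absorbed into one extra tensor slot, and then translate the equations of Proposition \ref{4.4} (applied to the canonical bimodule structures from Lemma \ref{5.1}) into the cyclic language of the inner-products $\langle\cdot\rangle_{k,l}$.

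First I would apply Lemma \ref{4.2}(b) to replace $F$ by an equivalent system of maps $\{f_{k,l}\colon A^{\otimes k}\otimes A\otimes A^{\otimes l}\to A^*\}_{k,l\geq 0}$, and then use the adjunction $\mathrm{Hom}(V,A^*)\cong\mathrm{Hom}(V\otimes A,R)$ to define
\[
\langle a_1,\ldots,a_{k+l+2}\rangle_{k,l}
:=\bigl(f_{k,l}(a_1,\ldots,a_{k+l+1})\bigr)(a_{k+l+2}),
\]
where $a_{k+1}$ marks the distinguished $M=A$-slot and $a_{k+l+2}$ is the evaluation argument. This bijection immediately gives (v): the positions of $a_{k+1}$ and $a_{k+l+2}$ determine $r$ and $s$, and (ii): the evaluation argument always sits in the final slot of $\langle\cdot\rangle_{r,s}$.

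Next I would substitute the explicit formulas of Lemma \ref{5.1} into Proposition \ref{4.4}. On the left-hand side we have $b_{k,l}=m_{k+l+1}$, so the three sums of Proposition \ref{4.4} become instances of a single cyclic pattern: an $m_i$ acts either in the left block of $A$-arguments, or across the distinguished $M$-slot $a_{k+1}$ (coming from $b_{p,q}=m_{p+q+1}$ with some tensor-arguments to the left and right of $a_{k+1}$), or in the right block of $A$-arguments. Collecting these contributions and separating out the $i=1$ terms yields exactly the left-hand side $\sum (-1)^{\sum|a_j|}\langle\ldots,\partial(a_i),\ldots\rangle_{k,l}$ of the stated equation, together with the $\langle\ldots,m_j(a_n,\ldots),\ldots\rangle_{r,s}$ terms ($j\geq 2$) for which the multiplication does not touch $a_{k+l+2}$ (hence conditions (i) and (iv) in this range).

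On the right-hand side of Proposition \ref{4.4} the $c_{j,*}$ occur; expanding $c_{j,*}$ via the dualization rule of Lemma \ref{3.9} produces a cyclic rotation which moves $a_{k+l+2}$ into the interior of an $m_j$ and brings some of the arguments $a_1,\ldots,a_{k}$ to the right of it. This is precisely the description (iii) of a term $\langle a_i,\ldots,m_j(a_n,\ldots,a_{k+l+2},a_1,\ldots,a_{i-1})\rangle_{r,s}$ with $i>1$; the case $j=1$ again assembles into the $\partial$-terms on the left. Putting everything together reproduces the equation of the proposition, and the reverse direction is obtained by reading these manipulations backwards and invoking Lemma \ref{4.2}(b) to reconstruct $F$ from any system $\{\langle\cdot\rangle_{k,l}\}$ satisfying the identities.

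The main obstacle will be the sign bookkeeping: the exponents $\varepsilon$ and $\varepsilon'$ from Proposition \ref{4.4} must combine with the cyclic Koszul sign $\varepsilon=(|a_1|+\cdots+|a_k|)\cdot(|a^*|+|a_{k+1}|+\cdots+|a_{k+l}|+|a|)+|a^*|(k+l+1)$ of Lemma \ref{3.9} to reduce, on the $m_1$ terms, to the uniform sign $(-1)^{\sum_{j=1}^{i-1}|a_j|}$ indicated in the statement. I would verify this by a case split according to whether the $m_i$ in Proposition \ref{4.4} sits strictly left of $a_{k+1}$, crosses $a_{k+1}$, sits strictly right of $a_{k+1}$, or (on the right-hand side, via $c_{j,*}$) wraps around through $a_{k+l+2}$; in each case, the parity computation is a finite check.
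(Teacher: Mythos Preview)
Your proposal is correct and follows essentially the same route as the paper: pass from $F$ to components $f_{k,l}$ via Lemma \ref{4.2}(b), dualize to define $\langle\cdot\rangle_{k,l}$, plug the canonical bimodule structures of Lemma \ref{5.1} into the equations of Proposition \ref{4.4}, and observe that the $b$-terms give the multiplications not touching $a_{k+l+2}$ while the $c$-terms produce the cyclically wrapped ones of condition (iii). The paper likewise leaves the signs to ``an explicit check,'' so your more detailed case-split plan for the sign bookkeeping is in fact slightly more thorough than what the paper writes; the only cosmetic difference is that the paper inserts an extra normalization factor $(-1)^{|a'|}$ in the definition of $\langle\cdot\rangle_{k,l}$.
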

A graphical representation of the above conditions is given in
Definition \ref{5.5} and Example \ref{exp-4.1} below.
\begin{proof}
We use the description from Proposition \ref{4.4} for
A$_\infty$-bimodule-maps. An A$_\infty$-bimodule-map from $A$ to
$A^{*}$ is given by maps $f_{k,l}:A^{\otimes k}\otimes A \otimes
A^{\otimes l}\to A^{*}$, for $k,\,\,\, l\geq 0$. These are
interpreted as maps $A^{\otimes k}\otimes A \otimes A^{\otimes
l}\otimes A\to R$, which we denoted by the inner-product-symbol
$<...>_{k,l}$:
$$ <a_{1},...,a_{k+l+1},a'>_{k,l}:=(-1)^{|a'|}
   (f_{k,l}(a_{1},...,a_{k+l+1}))(a') $$
The A$_\infty$-bimodule-map condition from Proposition \ref{4.4}
becomes
\begin{multline}\label{eqn-cond}
 \sum \pm f_{k,l}(...,m_{i}(...),...,a,...)+
   \sum \pm f_{k,l}(...,b_{i,j}(...,a,...),...)\\
 + \sum \pm f_{k,l}(...,a,...,m_{i}(...),...)=
   \sum \pm c_{i,j}(...,f_{k,l}(...,a,...),...).
\end{multline}
Here $a\in A$ is the $(k+1)$-th entry of an element in $A^{\otimes
k}\otimes A \otimes A^{\otimes l}$, so that it comes from the
\textit{A$_\infty$-bimodule} $A$, instead of the
\textit{A$_\infty$-algebra} $A$.

Now, by Lemma \ref{5.1} (a), $b_{i,j}=m_{i+j+1}$ is one of the
multiplications from the A$_\infty$-structure, and therefore the
left side of the equation is $f_{k,l}$ applied to all possible
multiplications $m_{i}$. As $f_{k,l}$ maps into $A^{*}$, we may
apply the left hand side of \eqref{eqn-cond} to an element $a'\in A$
and use the notation $<...>_{k,l}$:
\begin{equation}\label{eqn-5.1}
  \sum \pm (f_{k,l}(...,m_{i}(...),...))(a') =
    \sum \pm <...,m_{i}(...),...,a'>_{k,l}.
\end{equation}
Next, use Lemma \ref{5.1} (b) to rewrite the right hand side of
\eqref{eqn-cond}:
\begin{equation}\label{eqn-5.2}
 \sum \pm (c_{i,j}(a_{1},...,f_{k,l}(...,a,...),...,a_{k+l+1}))(a')
\quad\quad\quad\quad\quad
\end{equation}
\begin{eqnarray*}
 &=& \sum \pm (f_{k,l}(...,a,...))(m_{r}(...,a_{k+l+1},a',a_{1},...)) \\
 &=& \sum \pm <...,a,...,m_{r}(...,a_{k+l+1},a',a_{1},...)>_{k,l}
\end{eqnarray*}
Equations \eqref{eqn-5.1} and \eqref{eqn-5.2} show that we take a
sum over all possibilities of applying one multiplication to the
arguments of the inner-product subject to the conditions (i)-(iv).
This is the statement of the Proposition, after isolating the
$\partial$-terms on the left. For condition (v), notice that the
extensions of $D$ and $D^{A}$ from Lemma \ref{2.3} (a) and Lemma
\ref{3.5} (a) record the special entry $a$ in the
A$_\infty$-bimodule $A$. Thus, the A$_\infty$-bimodule element $a$
determines the index $k$, and $l$ is determined by the number of
arguments of $<...>_{k,l}$.

An explicit check shows the correctness of the signs.
\end{proof}
There is a diagrammatic way of picturing Proposition \ref{5.4}.
\begin{defn}\label{5.5}
Let $(A,D)$ be an A$_\infty$-algebra with $\infty$-inner-product
$\{<.,.,...>_{k,l}:A^{\otimes k+l+2} \to R\}_{k\geq 0, l\geq 0}$. To
the inner-product $<...>_{k,l}$, we associate the symbol
\[
\begin{pspicture}(0,0)(4,4)
 \psline(.5,2)(3.5,2)
 \psline(2,2)(1.2,3)
 \psline(2,2)(1.6,3)
 \psline(2,2)(2,3)
 \psline(2,2)(2.4,3)
 \psline(2,2)(2.8,3)
 \psline(2,2)(1.4,1)
 \psline(2,2)(1.8,1)
 \psline(2,2)(2.2,1)
 \psline(2,2)(2.6,1)
 \psdots[dotstyle=o,dotscale=2](2,2)
 \rput[b](2.8,3.2){$1$}  \rput[b](2.4,3.2){$2$}
 \rput[b](1.8,3.2){$...$} \rput[b](1.2,3.2){$k$}
 \rput[b](.5,2.2){$k+1$}
 \rput[b](3.5,2.2){$k+l+2$}
 \rput[tr](1.4,.8){$k+2$} \rput[t](2,.8){$...$} \rput[tl](2.6,.8){$k+l+1$}
\end{pspicture}
\]
More generally, to any inner-product which has (possibly iterated)
multiplications $m_{2}, m_{3}, m_{4}, ...$ (but without differential
$\partial=m_{1}$), such as
$$<a_{i},...,m_{j}(...),...,m_{p}(..., m_{q}(...) ,...),...>_{k,l},$$
we associate a diagram like above, by the following rules:
\begin{itemize}
\item[(i)]
To every multiplication $m_{j}$, associate a tree with $j$ inputs
and one output.
\[
\begin{pspicture}(0,0.5)(4,3.5)
 \psline(2,2)(1.2,3)
 \psline(2,2)(1.6,3)
 \psline(2,1)(2,3)
 \psline(2,2)(2.4,3)
 \psline(2,2)(2.8,3)
 \psdots[dotstyle=*,dotscale=2](2,2)
 \rput[l](2.4,2){$m_{j}$}
\end{pspicture}
\]
The symbol for the multiplication may also occur in a rotated way.
\item[(ii)]
To the inner product $<...>_{r,s}$, associate the open circle:
\[
\begin{pspicture}(0,0)(4,4)
 \psline(.5,2)(3.5,2)
 \psline(2,2)(1.2,3)
 \psline(2,2)(1.6,3)
 \psline(2,2)(2,3)
 \psline(2,2)(2.4,3)
 \psline(2,2)(2.8,3)
 \psline(2,2)(1.4,1)
 \psline(2,2)(1.8,1)
 \psline(2,2)(2.2,1)
 \psline(2,2)(2.6,1)
 \psdots[dotstyle=o,dotscale=2](2,2)
 \rput[b](2.8,3.2){$1$}  \rput[b](2.4,3.2){$2$}
 \rput[b](1.8,3.2){$...$} \rput[b](1.2,3.2){$r$}
 \rput[b](.5,2.2){$r+1$}
 \rput[b](3.5,2.2){$r+s+2$}
 \rput[tr](1.4,.8){$r+2$} \rput[t](2,.8){$...$} \rput[tl](2.6,.8){$r+s+1$}
\end{pspicture}
\]
There are $r$ elements attached at the top of the circle, and $s$
elements at the bottom of the circle, and the two (special) inputs
$(r+1)$ and $(r+s+2)$ are attached on the left and right. This gives
a total of $r+s+2$ inputs.
\item[(iii)]
The inputs $a_i$, for $i=1,\dots, r+s+2$, will be attached
counterclockwise, where the last element $a_{r+s+2}$ is in the far
right slot. For the multiplications $m_{j}$ of the graph, we use the
counterclockwise orientation of the plane to find the correct order
of the arguments $a_{i}$ in $m_{j}$.
\end{itemize}
We call these diagrams \textbf{inner-product-diagrams}.
\end{defn}

\begin{expl}\label{exp-4.1}
Let $a, b, c, d, e, f, g, h, i, j, k \in A$.
\begin{itemize}
\item
$<a,b,c,d>_{2,0}$, ($deg=2$):
\[
\begin{pspicture}(0,0)(4,3)
 \psline(.5,1)(3.5,1)
 \psline(2,1)(1.4,2)
 \psline(2,1)(2.6,2)
 \psdots[dotstyle=o,dotscale=2](2,1)
 \rput[b](2.6,2.2){$a$}  \rput[b](1.4,2.2){$b$}
 \rput[b](0.5,1.2){$c$}  \rput[b](3.5,1.2){$d$}
\end{pspicture}
\]
\item
$<a,b,c,d,e,f,g,h,i>_{3,4}$, ($deg=7$):
\[
\begin{pspicture}(0,0)(4,4)
 \psline(.5,2)(3.5,2)
 \psline(2,2)(1.6,3)
 \psline(2,2)(2,3)
 \psline(2,2)(2.4,3)
 \psline(2,2)(1.4,1)
 \psline(2,2)(1.8,1)
 \psline(2,2)(2.2,1)
 \psline(2,2)(2.6,1)
 \psdots[dotstyle=o,dotscale=2](2,2)
 \rput[b](2,3.2){$b$}  \rput[b](2.4,3.2){$a$}
 \rput[b](1.6,3.2){$c$}
 \rput[b](.5,2.2){$d$} \rput[b](3.5,2.2){$i$}
 \rput(1.4,.7){$e$} \rput(1.8,.7){$f$}
 \rput(2.2,.7){$g$} \rput(2.6,.7){$h$}
\end{pspicture}
\]
\item
$<m_{2}(m_{2}(b,c),m_{2}(d,e)),m_{2}(f,a)>_{0,0}$, ($deg=0$):
\[
\begin{pspicture}(0,0)(4,4)
 \psline(.5,2)(3.5,2)
 \psline(3,2)(3.5,3)
 \psline(1,2)(.5,3)
 \psline(1.66,2)(.74,1)
 \psline(1.2,1.5)(1.2,.8)
 \psdots[dotstyle=*,dotscale=2](1.66,2)
 \psdots[dotstyle=*,dotscale=2](1,2)
 \psdots[dotstyle=*,dotscale=2](1.2,1.5)
 \psdots[dotstyle=*,dotscale=2](3,2)
 \psdots[dotstyle=o,dotscale=2](2.33,2)
 \rput[b](.2,2){$c$}    \rput[b](3.8,2){$f$}
 \rput[b](3.5,3.2){$a$} \rput[b](0.5,3.2){$b$}
 \rput[b](.5,.8){$d$}   \rput[b](1.2,.5){$e$}
\end{pspicture}
\]
\item
$<a,b,m_{3}(c,d,m_{2}(e,f)),g,m_{2}(h,i))>_{1,2}$, ($deg=4$):
\[
\begin{pspicture}(0,-.8)(4,4)
 \psline(.5,2)(3.5,2)
 \psline(2,2)(2,3)
 \psline(2,2)(.8,.8)
 \psline(2,2)(2.6,1)
 \psline(2.8,2)(3.4,1.4)
 \psline(1.3,1.3)(.6,1.2)
 \psline(1.3,1.3)(1.4,.6)
 \psline(1.4,.6)(1.2,.2)
 \psline(1.4,.6)(1.7,.2)
 \psdots[dotstyle=o,dotscale=2](2,2)
 \psdots[dotstyle=*,dotscale=2](2.8,2)
 \psdots[dotstyle=*,dotscale=2](1.3,1.3)
 \psdots[dotstyle=*,dotscale=2](1.4,.6)
 \rput[b](2,3.2){$a$} \rput[b](.5,2.2){$b$}
 \rput(2.6,.7){$g$}   \rput[b](3.5,2.2){$i$}
 \rput[b](3.6,1.2){$h$}
 \rput[b](.4,1.1){$c$} \rput[b](.6,.4){$d$}
 \rput[b](1.1,-.2){$e$} \rput[b](1.7,-.2){$f$}
\end{pspicture}
\]
\item
$<c,m_{2}(d,e),m_{2}(m_{2}(f,g),h),i,m_{4}(j,k,a,b)>_{2,1}$,
($deg=5$):
\[
\begin{pspicture}(0,0)(4,4)
 \psline(.5,2)(3.5,2)
 \psline(3,2)(3.5,3)
 \psline(1,2)(.5,3)
 \psline(1.66,2)(.74,1)
 \psline(3,2)(3.6,2.5)
 \psline(2.33,2)(2,1)
 \psline(3,2)(3.6,1.2)
 \psline(2.33,2)(2.6,3)
 \psline(2.33,2)(2,2.5)
 \psline(2,2.5)(2,3)
 \psline(2,2.5)(1.5,3)
 \psdots[dotstyle=*,dotscale=2](1.66,2)
 \psdots[dotstyle=*,dotscale=2](1,2)
 \psdots[dotstyle=*,dotscale=2](3,2)
 \psdots[dotstyle=*,dotscale=2](2,2.5)
 \psdots[dotstyle=o,dotscale=2](2.33,2)
 \rput[b](.2,1.9){$g$}  \rput[b](3.8,1.9){$k$}
 \rput[b](3.5,3.2){$b$} \rput[b](0.5,3.2){$f$}
 \rput[b](.5,.8){$h$}   \rput[b](3.8,2.5){$a$}
 \rput[b](2,.6){$i$}    \rput[b](3.8,1){$j$}
 \rput[b](2.6,3.2){$c$} \rput[b](2,3.2){$d$}
 \rput[b](1.5,3.2){$e$}
\end{pspicture}
\]
\end{itemize}
\end{expl}
\begin{defn}\label{5.6}
We define a chain-complex associated to inner-product-diagrams.

We define the degree of the inner-product-diagram associated to
$<...>_{k,l}$ with multiplications $m_{i_{1}},...,m_{i_{n}}$ to be
$k+l+\sum_{j=1}^{n} (i_{j}-2)$. Examples are given in \ref{exp-4.1}.
For $n\geq 0$, let $C_{n}$ be the space generated by
inner-product-diagrams of degree $n$. Then let $C:=\bigoplus_{n\geq
0} C_{n}$.

As for the differential $d$ on $C$, we use the composition with the
operator $\tilde{\partial}:=\sum_{i} id\otimes ...\otimes id\otimes
\partial \otimes id\otimes ... \otimes id$, where $\partial=m_{1}$
is at the $i$-th spot:
\begin{multline*}
 (d(<...,m(...,m(...),...),...>))(a_{1},...,a_{s})\\
 :=(<...,m(...,m(...),...),...> ) (\sum_{i=1}^{s} (-1)^{\sum
 _{j=1}^{i-1}|a_{j}|} (a_{1},...,\partial (a_{i}),...,a_{s}))
\end{multline*}

Some remarks and interpretations of this expression are in order.
First, consider the inner-product $<...>_{k,l}$ without any
multiplications. By Proposition \ref{5.4}, the differential applies
one multiplication into the inner-product-diagram in all possible
spots, such that the two lines on the far left and on the far right
are not being multiplied; compare Proposition \ref{5.4} (iv).

In the case, that multiplications are applied to the inner-product,
one can observe from Proposition \ref{2.3}, that $ \sum_{i}
m_{n}\circ(id\otimes ... \otimes \partial\otimes ... \otimes id) $
is given by the two terms
\begin{equation}\label{eqn-5.3}
\sum_{i} m_{n}\circ(id\otimes ... \otimes \partial\otimes ...
\otimes id)=\sum_{k=2}^{n-1} \sum_{i} m_{n+1-k} \circ(id\otimes ...
\otimes m_{k} \otimes ... \otimes id) + \partial \circ m_{n}.
\end{equation}
The sum over $i$ on both sides of the above equation applies $m_k$
to the $i$-th spot. The first term on the right hand side of
\eqref{eqn-5.3} transforms the multiplication $m_{n}$ into a sum of
all possible sompositions of $m_{n+1-k}$ and $m_{k}$:
\[
\begin{pspicture}(0,0.5)(4.5,3.5)
 \psline(2,2)(1.2,3)
 \psline(2,2)(1.6,3)
 \psline(2,1)(2,3)
 \psline(2,2)(2.4,3)
 \psline(2,2)(2.8,3)
 \psdots[dotstyle=*,dotscale=2](2,2)
 \rput[l](2.4,2){$m_{n}$}
 \rput(4,2){$\Longrightarrow$}
\end{pspicture}
\begin{pspicture}(0,0.5)(4,3.5)
 \psline(2,1)(2,1.5)
 \psline(2,1.5)(.5,3)
 \psline(2,1.5)(3.2,3)
 \psline(2,1.5)(3.6,3)
 \psline(2,1.5)(1.6,2.3)
 \psline(1.6,2.3)(1.3,3)
 \psline(1.6,2.3)(1.5,3)
 \psline(1.6,2.3)(1.7,3)
 \psdots[dotstyle=*,dotscale=2](1.6,2.3)
 \rput[l](1.9,2.3){$m_{k}$}
 \psdots[dotstyle=*,dotscale=2](2,1.5)
 \rput[l](2.4,1.5){$m_{n-k+1}$}
\end{pspicture}
\]
The last term \eqref{eqn-5.3} is used for an inductive argument of
the above. One gets a term $\partial (m_{n}(...))$ attached to the
inner-product or possibly another multiplication, that has arguments
with $\tilde{\partial}$ applied, so that the above discussion can be
continued inductively.

We conclude, that the differential applies exactly one
multiplication in all possible spots, without multiplying the given
far left and far right inputs. Examples are given in Example
\ref{5.7} below.

It is $d:C_{n}\to C_{n-1}$, and $d^{2}=0$.
\begin{proof}
According to the definition of the degrees above, a multiplication
$m_{n}$ with $n$ inputs contributes by $n-2$. Taking the
differential applies one more multiplication in all possible ways.
If we attach $m_{n}$ to the diagram, then it replaces $n$ arguments
with one argument in the higher level. Therefore,
\begin{eqnarray*}
\text{new degree} & = & (\text{old degree})-n+1+(n-2) \\
                  & = & (\text{old degree})-1.
\end{eqnarray*}

We can prove $d^{2}=0$ in two ways:
\begin{itemize}
\item
Algebraically:

The definition of $d$ on the inner-products is given by composition
with the operator $\tilde{\partial}=\sum_{i} id\otimes ...\otimes
id\otimes \partial \otimes id\otimes ... \otimes id$, where
$\partial$ is in the $i$-th spot. Thus $d^{2}$ is composition with
$$ \tilde{\partial}^{2}=\sum_{i,j} \pm id\otimes ... \otimes \partial
\otimes ...\otimes \partial \otimes ... \otimes id=0. $$ This
vanishes, since the sum has two terms, where $\partial$ occurs at
the $i$-th and the $j$-th spot. This is obtained, by either first
applying $\partial$ to the $i$-th and then to the $j$-th spot, or
vice versa. These two possibilities cancel as $\partial$ is of
degree $-1$ and the first $\partial$ either has to move over the
second $\partial$, by which an additional minus sign is introduced,
or not.
\item
Diagrammatically (without signs):

$d$ applies one new multiplication to the inner-product-diagram, so
that $d^{2}$ applies two new multiplications. For two
multiplications, we have the following two possibilities.
\begin{itemize}
\item[(i)]
In the first case, the multiplications are on different outputs.
\[
\begin{pspicture}(0,0)(8.5,3)
 \psline(.5,.5)(.5,2.5)
 \psline(.7,.5)(.7,2.5)
 \psline(.9,.5)(.9,2.5)
 \psline(1.1,.5)(1.1,2.5)
 \psline(1.3,.5)(1.3,2.5)
 \psline{->}(2,1.5)(3,1.5)
 \psline(3.7,.5)(3.7,2.5)
 \psline(3.9,.5)(3.9,2.5)
 \psline(4.5,1.5)(4.3,2.5)
 \psline(4.5,.5)(4.5,2.5)
 \psline(4.5,1.5)(4.7,2.5)
 \psdots[dotstyle=*,dotscale=2](4.5,1.5)
 \psline{->}(5.4,1.5)(6.4,1.5)
 \psline(7,2.5)(7.1,1.5)
 \psline(7.2,2.5)(7.1,1.5)
 \psline(7.1,1.5)(7.1,.5)
 \psline(7.8,1.5)(7.6,2.5)
 \psline(7.8,.5)(7.8,2.5)
 \psline(7.8,1.5)(8,2.5)
 \psdots[dotstyle=*,dotscale=2](7.8,1.5)
 \psdots[dotstyle=*,dotscale=2](7.1,1.5)
\end{pspicture}
\]
\[
\begin{pspicture}(0,0)(8.5,3)
 \psline(.5,.5)(.5,2.5)
 \psline(.7,.5)(.7,2.5)
 \psline(.9,.5)(.9,2.5)
 \psline(1.1,.5)(1.1,2.5)
 \psline(1.3,.5)(1.3,2.5)
 \psline{->}(2,1.5)(3,1.5)
 \psline(4.7,.5)(4.7,2.5)
 \psline(4.5,.5)(4.5,2.5)
 \psline(4.3,.5)(4.3,2.5)
 \psline(3.7,2.5)(3.8,1.5)
 \psline(3.9,2.5)(3.8,1.5)
 \psline(3.8,1.5)(3.8,.5)
 \psdots[dotstyle=*,dotscale=2](3.8,1.5)
 \psline{->}(5.4,1.5)(6.4,1.5)
 \psline(7,2.5)(7.1,1.5)
 \psline(7.2,2.5)(7.1,1.5)
 \psline(7.1,1.5)(7.1,.5)
 \psline(7.8,1.5)(7.6,2.5)
 \psline(7.8,.5)(7.8,2.5)
 \psline(7.8,1.5)(8,2.5)
 \psdots[dotstyle=*,dotscale=2](7.8,1.5)
 \psdots[dotstyle=*,dotscale=2](7.1,1.5)
\end{pspicture}
\]
The above figure shows that the final terms are obtained in two
different ways, which in fact cancel each other.
\item[(ii)]
The other possibility is to have multiplications on the same output.
Again, these terms may be obtained in two ways that cancel each
other:
\[
\begin{pspicture}(0,0)(8.5,3)
 \psline(.5,.5)(.5,2.5)
 \psline(.7,.5)(.7,2.5)
 \psline(.9,.5)(.9,2.5)
 \psline(1.1,.5)(1.1,2.5)
 \psline(1.3,.5)(1.3,2.5)
 \psline{->}(2,1.5)(3,1.5)
 \psline(3.7,.5)(3.7,2.5)
 \psline(3.9,.5)(3.9,2.5)
 \psline(4.5,1.5)(4.3,2.5)
 \psline(4.5,.5)(4.5,2.5)
 \psline(4.5,1.5)(4.7,2.5)
 \psdots[dotstyle=*,dotscale=2](4.5,1.5)
 \psline{->}(5.4,1.5)(6.4,1.5)
 \psline(7,2.5)(7.3,1.2)
 \psline(7.3,2.5)(7.3,.5)
 \psline(7.3,1.2)(7.8,1.8)
 \psline(7.8,1.8)(7.6,2.5)
 \psline(7.8,1.8)(7.8,2.5)
 \psline(7.8,1.8)(8,2.5)
 \psdots[dotstyle=*,dotscale=2](7.8,1.8)
 \psdots[dotstyle=*,dotscale=2](7.3,1.2)
\end{pspicture}
\]
\[
\begin{pspicture}(0,0)(8.5,3)
 \psline(.5,.5)(.5,2.5)
 \psline(.7,.5)(.7,2.5)
 \psline(.9,.5)(.9,2.5)
 \psline(1.1,.5)(1.1,2.5)
 \psline(1.3,.5)(1.3,2.5)
 \psline{->}(2,1.5)(3,1.5)
 \psline(4.2,1.5)(3.8,2.5)
 \psline(4.2,1.5)(4.0,2.5)
 \psline(4.2,0.5)(4.2,2.5)
 \psline(4.2,1.5)(4.4,2.5)
 \psline(4.2,1.5)(4.6,2.5)
 \psdots[dotstyle=*,dotscale=2](4.2,1.5)
 \psline{->}(5.4,1.5)(6.4,1.5)
 \psline(7,2.5)(7.3,1.2)
 \psline(7.3,2.5)(7.3,.5)
 \psline(7.3,1.2)(7.8,1.8)
 \psline(7.8,1.8)(7.6,2.5)
 \psline(7.8,1.8)(7.8,2.5)
 \psline(7.8,1.8)(8,2.5)
 \psdots[dotstyle=*,dotscale=2](7.8,1.8)
 \psdots[dotstyle=*,dotscale=2](7.3,1.2)
\end{pspicture}
\]
\end{itemize}
\end{itemize}
\end{proof}
\end{defn}
\begin{expl}\label{5.7}
Let $a, b, c \in A$.
\begin{itemize}
\item
$k=0$, $l=0$: $d(<a,b>_{0,0})=0$
\[
\begin{pspicture}(0.6,0.4)(3.6,1.8)
 \psline(1.3,1)(2.7,1)
 \psdots[dotstyle=o,dotscale=2](2,1)
 \rput[b](1.3,1.1){$a$} \rput[b](2.8,1.1){$b$}
 \rput(.7,1.1){$\textrm{d}($} \rput(3.55,1.1){$)=0$}
\end{pspicture}
\]
\item
$k=1$, $l=0$: $d(<a,b,c>_{1,0})=<a\cdot b,c>_{0,0}\pm <b,c\cdot
a>_{0,0}$
\[
\begin{pspicture}(0,0)(10,2.5)
 \psline(1.3,1)(2.7,1)
 \psline(2,1)(2,1.8)
 \psdots[dotstyle=o,dotscale=2](2,1)
 \rput[b](1.3,1.1){$b$} \rput[b](2.7,1.1){$c$} \rput[b](2.2,1.6){$a$}
 \rput(.7,1.1){$\textrm{d}($} \rput(3.4,1.1){$)=$}

 \psline(4.3,1)(5.7,1)
 \psline(4.8,1)(4.6,1.8)
 \psdots[dotstyle=*,dotscale=2](4.8,1)
 \psdots[dotstyle=o,dotscale=2](5.3,1)
 \rput[b](4.3,1.1){$b$} \rput[b](5.7,1.1){$c$} \rput[b](4.8,1.8){$a$}

 \rput(6.5,1.1){$\pm$}

 \psline(7.3,1)(8.8,1)
 \psline(8.3,1)(8.5,1.8)
 \psdots[dotstyle=*,dotscale=2](8.3,1)
 \psdots[dotstyle=o,dotscale=2](7.8,1)
 \rput[b](7.3,1.1){$b$} \rput[b](8.8,1.1){$c$} \rput[b](8.3,1.8){$a$}
\end{pspicture}
\]
\item
$k=0$, $l=1$: $d(<a,b,c>_{0,1})=<a\cdot b,c>_{0,0}\pm <a,b\cdot
c>_{0,0}$
\[
\begin{pspicture}(0,-.2)(10,1.8)
 \psline(1.3,1)(2.7,1)
 \psline(2,1)(2,0.2)
 \psdots[dotstyle=o,dotscale=2](2,1)
 \rput[b](1.3,1.1){$a$} \rput[b](2.7,1.1){$c$} \rput[b](2.2,0.3){$b$}
 \rput(.7,1.1){$\textrm{d}($} \rput(3.4,1.1){$)=$}

 \psline(4.3,1)(5.7,1)
 \psline(4.8,1)(4.6,0.2)
 \psdots[dotstyle=*,dotscale=2](4.8,1)
 \psdots[dotstyle=o,dotscale=2](5.3,1)
 \rput[b](4.3,1.1){$a$} \rput[b](5.7,1.1){$c$} \rput[b](4.8,0.2){$b$}

 \rput(6.5,1.1){$\pm$}

 \psline(7.3,1)(8.8,1)
 \psline(8.3,1)(8.5,0.2)
 \psdots[dotstyle=*,dotscale=2](8.3,1)
 \psdots[dotstyle=o,dotscale=2](7.8,1)
 \rput[b](7.3,1.1){$a$} \rput[b](8.8,1.1){$c$} \rput[b](8.3,0.2){$b$}
\end{pspicture}
\]
\end{itemize}
In the following three figures, where $k+l=2$, the righthand side is
understood to be a sum over the five, or respectively six,
inner-product-diagrams. Then, as $d^{2}=0$, the terms may be
arranged according to their boundaries. We obtain the polyhedra
associated to the inner-products $<...>_{k,l}$.
\begin{itemize}
\item
$k=2$, $l=0$:
\[
\begin{pspicture}(0,0)(10,6)
 \psline(1.3,2.9)(2.7,2.9) \psline(2,2.9)(2.3,3.4) \psline(2,2.9)(1.7,3.4)
 \psdots[dotstyle=o,dotscale=2](2,2.9)
 \rput(.7,3){$\textrm{d}($} \rput(3.4,3){$)=$}

 \psline(5.2,3.2)(6.55,4.2) \psline(6.55,4.2)(7.9,3.2)
 \psline(7.9,3.2)(7.3,1.8) \psline(7.3,1.8)(5.8,1.8)  \psline(5.8,1.8)(5.2,3.2)

 \psline(6,1.2)(7.1,1.2)
 \psline(6.55,1.4)(6.75,1.6) \psline(6.55,1.4)(6.35,1.6)
 \psdots[dotstyle=*,dotscale=1](6.55,1.4)
 \psline(6.55,1.2)(6.55,1.4) \psdots[dotstyle=o,dotscale=1](6.55,1.2)

 \psline(4,2.2)(5.1,2.2)
 \psline(4.3,2.2)(4.5,2.6) \psdots[dotstyle=*,dotscale=1](4.3,2.2)
 \psline(4.3,2.2)(4.1,2.6) \psdots[dotstyle=o,dotscale=1](4.8,2.2)

 \psline(8,2.2)(9.1,2.2)
 \psline(8.8,2.2)(8.6,2.6) \psdots[dotstyle=o,dotscale=1](8.3,2.2)
 \psline(8.8,2.2)(9,2.6) \psdots[dotstyle=*,dotscale=1](8.8,2.2)

 \psline(4.3,4)(5.4,4)
 \psline(4.6,4)(4.4,4.4) \psdots[dotstyle=*,dotscale=1](4.6,4)
 \psline(5.1,4)(5.1,4.4) \psdots[dotstyle=o,dotscale=1](5.1,4)

 \psline(7.7,4)(8.8,4)
 \psline(8,4)(8,4.4) \psdots[dotstyle=o,dotscale=1](8,4)
 \psline(8.5,4)(8.7,4.4) \psdots[dotstyle=*,dotscale=1](8.5,4)
\end{pspicture}
\]
\item
$k=1$, $l=1$:
\[
\begin{pspicture}(0,0)(10,6)
 \psline(1.3,2.9)(2.7,2.9) \psline(2,2.4)(2,3.4)
 \psdots[dotstyle=o,dotscale=2](2,2.9)
 \rput(.7,3){$\textrm{d}($} \rput(3.4,3){$)=$}

 \psline(5,3)(5.8,4.2) \psline(5.8,4.2)(7.3,4.2)  \psline(7.3,4.2)(8.1,3)
 \psline(8.1,3)(7.3,1.8) \psline(7.3,1.8)(5.8,1.8)  \psline(5.8,1.8)(5,3)

 \psline(6,1.2)(7.1,1.2)
 \psline(6.3,1.2)(6.1,.8)  \psdots[dotstyle=*,dotscale=1](6.3,1.2)
 \psline(6.8,1.2)(6.8,1.6) \psdots[dotstyle=o,dotscale=1](6.8,1.2)

 \psline(6,4.8)(7.1,4.8)
 \psline(6.3,4.8)(6.3,5.2) \psdots[dotstyle=o,dotscale=1](6.3,4.8)
 \psline(6.8,4.8)(7,4.4) \psdots[dotstyle=*,dotscale=1](6.8,4.8)

 \psline(4,2.2)(5.1,2.2)
 \psline(4.3,2.2)(4.1,2.6) \psdots[dotstyle=*,dotscale=1](4.3,2.2)
 \psline(4.3,2.2)(4.1,1.8) \psdots[dotstyle=o,dotscale=1](4.8,2.2)

 \psline(8,2.2)(9.1,2.2)
 \psline(8.3,2.2)(8.3,1.8) \psdots[dotstyle=o,dotscale=1](8.3,2.2)
 \psline(8.8,2.2)(9,2.6) \psdots[dotstyle=*,dotscale=1](8.8,2.2)

 \psline(4,3.8)(5.1,3.8)
 \psline(4.3,3.8)(4.1,4.2) \psdots[dotstyle=*,dotscale=1](4.3,3.8)
 \psline(4.8,3.8)(4.8,3.4) \psdots[dotstyle=o,dotscale=1](4.8,3.8)

 \psline(8,3.8)(9.1,3.8)
 \psline(8.8,3.8)(9,4.2) \psdots[dotstyle=o,dotscale=1](8.3,3.8)
 \psline(8.8,3.8)(9,3.4) \psdots[dotstyle=*,dotscale=1](8.8,3.8)
\end{pspicture}
\]
\item
$k=0$, $l=2$:
\[
\begin{pspicture}(0,0)(10,6)
 \psline(1.3,2.9)(2.7,2.9) \psline(2,2.9)(2.3,2.4) \psline(2,2.9)(1.7,2.4)
 \psdots[dotstyle=o,dotscale=2](2,2.9)
 \rput(.7,3){$\textrm{d}($} \rput(3.4,3){$)=$}

 \psline(5.2,3.2)(6.55,4.2) \psline(6.55,4.2)(7.9,3.2)
 \psline(7.9,3.2)(7.3,1.8) \psline(7.3,1.8)(5.8,1.8)  \psline(5.8,1.8)(5.2,3.2)

 \psline(6,1.2)(7.1,1.2)
 \psline(6.55,1)(6.75,.8) \psline(6.55,1)(6.35,.8)
 \psdots[dotstyle=*,dotscale=1](6.55,1)
 \psline(6.55,1.2)(6.55,1) \psdots[dotstyle=o,dotscale=1](6.55,1.2)

 \psline(4,2.2)(5.1,2.2)
 \psline(4.3,2.2)(4.5,1.8) \psdots[dotstyle=*,dotscale=1](4.3,2.2)
 \psline(4.3,2.2)(4.1,1.8) \psdots[dotstyle=o,dotscale=1](4.8,2.2)

 \psline(8,2.2)(9.1,2.2)
 \psline(8.8,2.2)(8.6,1.8) \psdots[dotstyle=o,dotscale=1](8.3,2.2)
 \psline(8.8,2.2)(9,1.8) \psdots[dotstyle=*,dotscale=1](8.8,2.2)

 \psline(4.3,4)(5.4,4)
 \psline(4.6,4)(4.4,3.6) \psdots[dotstyle=*,dotscale=1](4.6,4)
 \psline(5.1,4)(5.1,3.6) \psdots[dotstyle=o,dotscale=1](5.1,4)

 \psline(7.7,4)(8.8,4)
 \psline(8,4)(8,3.6) \psdots[dotstyle=o,dotscale=1](8,4)
 \psline(8.5,4)(8.7,3.6) \psdots[dotstyle=*,dotscale=1](8.5,4)
\end{pspicture}
\]
\end{itemize}
Finally, we graph the polyhedra in the case $k+l=3$. In general, the
polyhedron associated to $<...>_{k,l}$ is isomorphic to the one from
$<...>_{l,k}$. Furthermore, the polyhedra for $<...>_{n,0}$ and
$<...>_{0,n}$ are the ones known as Stasheff's associahedra.
\begin{itemize}
\item
The polyhedron for $k=3, l=0$ and for $k=0, l=3$:
\[\begin{pspicture}(0,0.2)(4,4)
%
 \psline(.4,3)(0,2.4)    \psline(.4,1.6)(0,2.4)
 \psline(.8,2.2)(.4,1.6) \psline(.8,2.2)(.4,3)
 \psline(3.6,3)(4,2.4)     \psline(3.6,1.6)(4,2.4)
 \psline(3.2,2.2)(3.6,1.6) \psline(3.2,2.2)(3.6,3)
 \psline(2,2.8)(1.6,2)   \psline(2,2.8)(2.4,2)
 \psline(2,1.2)(1.6,2)   \psline(2,1.2)(2.4,2)
 \psline(.4,3)(1.8,4)   \psline(2,1.2)(2,0.2)
 \psline(3.6,3)(1.8,4)  \psline(3.6,1.6)(2,0.2)
 \psline(2,2.8)(1.8,4)  \psline(.4,1.6)(2,0.2)
 \psline(.8,2.2)(1.6,2) \psline(3.2,2.2)(2.4,2)
 \psline[linestyle=dashed](0,2.4)(4,2.4)
\end{pspicture}\]
\item
The polyhedron for $k=2, l=1$ and for $k=1, l=2$:
\[\begin{pspicture}(0,0)(3.4,4)
%
 \psline(2.2,4)(.4,3.6)   \psline(.4,3.6)(0,2.8)
 \psline(0,2.8)(1.2,2.6)  \psline(1.2,2.6)(2.6,3)
 \psline(2.6,3)(3,3.6)    \psline(3,3.6)(2.2,4)
 \psline(3,3.6)(3.4,2.8)  \psline(3.4,2.8)(3.3,1.6)
 \psline(3.3,1.6)(2.6,3)  \psline(1.2,2.6)(1.2,1.4)
 \psline(2.4,.8)(1.2,1.4) \psline(2.4,.8)(3.3,1.6)
 \psline(0,2.8)(0,1.8) \psline(.4,1)(0,1.8) \psline(.4,1)(1.2,1.4)
 \psline(1.6,.6)(.4,1)  \psline(1.6,.6)(2.4,.8)
 \psline[linestyle=dashed](0,1.8)(.4,2.4)
 \psline[linestyle=dashed](.4,3.6)(.4,2.4)
 \psline[linestyle=dashed](2.1,1.7)(1.4,2.2)
 \psline[linestyle=dashed](2,3)(1.4,2.2)
 \psline[linestyle=dashed](2.1,1.7)(2.8,2.4)
 \psline[linestyle=dashed](2,3)(2.8,2.4)
 \psline[linestyle=dashed](2.1,1.7)(2.4,.8)
 \psline[linestyle=dashed](1.4,2.2)(.4,2.4)
 \psline[linestyle=dashed](2.8,2.4)(3.4,2.8)
 \psline[linestyle=dashed](2,3)(2.2,4)
\end{pspicture}\]
\end{itemize}
\end{expl}


\begin{thebibliography}{BFK3s}

\bibitem[CJ]{CJ} R.L. Cohen, J.D.S. Jones \textit{A Homotopy Theoretic
Realization Of String Topology}, Mathematische Annalen published
online: DOI 10.1007/s00208-002-0362-0, 2002

\bibitem[CS]{CS} M. Chas, D. Sullivan, \textit{String Topology}, arXiv:math/9911159v1

\bibitem[G]{G} M. Gerstenhaber, \textit{The Cohomology Structure Of An
Associative Ring}, Annals of Mathematics, Vol. 78, No. 2, 1963

\bibitem[GJ1]{GJ} E. Getzler, J.D.S. Jones, \textit{A$_\infty$-algebras
and the cyclic bar complex}, Illinois Journal of Mathematics, vol.
34, No. 2, 1990, p. 256-283

\bibitem[GJ2]{GJ2} E. Getzler, J.D.S. Jones, \textit{Operads,
homotopy algebra and iterated integrals for double loop spaces}, arXiv:hep-th/9403055v1

\bibitem[Ma]{M} M. Markl, \textit{A cohomology theory for $A(m)$-algebras
 and applications}, Journ. Pure. Appl. Algebra 83, 1992, p. 141-175

\bibitem[Me]{Me} L. Menichi, \textit{String topology for spheres}, arXiv:math/0609304v1

\bibitem[S1]{S} J. Stasheff, \textit{Homotopy associativity of
H-spaces I}, Trans. AMS 108, 1963, p. 275-292

\bibitem[S2]{S2} J. Stasheff, \textit{The intrinsic bracket on the
deformation complex of an associative algebra}, Journal of Pure
and Applied Algebra 89, 1993, p. 231-235

\bibitem[T]{T} T. Tradler, \textit{The BV algebra on Hochschild cohomology
 induced by infinity inner products}, arXiv:math/0210150v1

\bibitem[TZ]{TZ2} T. Tradler, M. Zeinalian, \textit{Algebraic String
 Operations}, arXiv:math/0605770v1, to be published in K-Theory

\bibitem[TZS]{TZ} T. Tradler, M. Zeinalian, D. Sullivan \textit{Infinity Structure
 of Poincare Duality Spaces}, arXiv:math/0309455v2, to be published in
 Algebraic and Geometric Topology

\end{thebibliography}
\end{document}